\documentclass[10pt,a4paper]{article}
\usepackage[english]{babel}
\usepackage[margin=2.5cm]{geometry}
\usepackage{amsmath,amssymb,amsthm}
\usepackage{indentfirst}
\usepackage[hidelinks]{hyperref}

\newtheorem{theorem}{Theorem}[section]
\newtheorem{proposition}{Proposition}[section]
\newtheorem{lemma}{Lemma}[section]

\newcommand{\mbb}{\mathbb}

\title{A Central Limit Theorem for Coprime Pair Counts in Randomly Translated Disks}
\author{Xin Hang Ji\thanks{Corresponding author. Email:
		\texttt{1000535190@smail.shnu.edu.cn}}\\
	Department of Mathematics, Shanghai Normal University}
\date{}
\begin{document}
	\maketitle
	
	\begin{abstract}
		We study the distribution of the number of coprime pairs in randomly translated disks.
		We prove that its spatial variance is asymptotic to $\sigma^2r\log r$ and establish a central limit theorem under the corresponding normalization,
		where $\sigma^2$ is an explicit positive constant.
		By truncating the numerators of rational frequencies in lowest terms, the proof reduces higher-moment estimates to counting squarefree denominators in zero-sum relations.
	\end{abstract}
	
	\section{Introduction}
	
	The limiting distribution of lattice-point counts depends on the averaging procedure and the shape of the region.
	Hughes--Rudnick~\cite{HR2004} studied integer lattice-point counts in thin annuli under averaging over the radius
	and proved a central limit theorem when the width of the annulus tends to zero sufficiently slowly.
	
	Sugita--Takanobu~\cite{ST2003} studied coprime pair counts in integer translates of squares,
	showing that the normalized limits depend on the arithmetic properties of the side length.
	Fern\'andez--Fern\'andez~\cite{FF2025} subsequently obtained explicit formulas for the spatial limiting distribution of the count for a fixed side length.
	
	In~\cite{Ji2026}, the author studied integer lattice-point counts in randomly translated high-dimensional balls,
	obtaining normal and lognormal limits under different growth conditions on the radius.
	The present paper concerns coprime pair counts in randomly translated two-dimensional disks.
	
	Let $|\cdot|$ denote the Euclidean norm on $\mathbb R^2$, let $\operatorname{meas}$ denote two-dimensional Lebesgue measure,
	and set $e(t)=\exp(2\pi i t)$. Define the set of coprime pairs and its counting function in a disk by
	\[
	\begin{aligned}
		\mathbb V
		&=\{(n_1,n_2)\in\mathbb Z^2:\gcd(n_1,n_2)=1\},\\
		G(r;\mathbf x)
		&=\#\{\mathbf n\in\mathbb V:|\mathbf x-\mathbf n|\le r\}
		\qquad(r>0,\ \mathbf x\in\mathbb R^2).
	\end{aligned}
	\tag*{(1.1)}
	\]
	
	We take spatial averages of the center over expanding disks: for each fixed $r$,
	we first let $\mathbf x$ be uniformly distributed in $|\mathbf x|\le R$ and take $R\to\infty$, and then let $r\to\infty$.
	With this order of limits, we prove that the spatial variance of the count is asymptotic to $\sigma^2r\log r$ (where $\sigma^2$ is a positive constant)
	and obtain a normal limit on the scale $\sqrt{\sigma^2r\log r}$.
	
	\begin{theorem}\label{thm:clt}
		Let
		\[
		\sigma^2
		=\frac{18}{\pi^6}
		\left(\sum_{\mathbf m\in\mathbb Z^2\setminus\{\mathbf0\}}
		|\mathbf m|^{-3}\right)
		\prod_p\left(1-\frac1{(p+1)^2}\right).
		\tag*{(1.2)}
		\]
		Then, for every $\nu\in\mathbb R$,
		\[
		\begin{aligned}
			&\lim_{r\to\infty}\lim_{R\to\infty}
			\frac1{\pi R^2}\operatorname{meas}\left\{
			\mathbf x\in\mathbb R^2:|\mathbf x|\le R,\quad
			\frac{G(r;\mathbf x)-6r^2/\pi}
			{\sqrt{\sigma^2r\log r}}\ge\nu\right\}\\
			&\qquad=\frac1{\sqrt{2\pi}}\int_\nu^\infty e^{-y^2/2}\,dy.
		\end{aligned}
		\tag*{(1.3)}
		\]
	\end{theorem}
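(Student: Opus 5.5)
The plan is to realize $G(r;\cdot)$ as a Besicovitch almost periodic function of $\mathbf x$ and to prove the central limit theorem by the method of moments. The inner limit $R\to\infty$ then becomes a mean value over $\mathbf x$, which is computed by Parseval's identity and its multilinear analogues.

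\emph{Step 1: Fourier expansion.} Write $\mathbf 1_{\mathbb V}(\mathbf n)=\sum_{d\mid\gcd(n_1,n_2)}\mu(d)$. Then
\[
G(r;\mathbf x)=\sum_{d\ge1}\mu(d)\,\#\{\mathbf n\in d\mathbb Z^2:|\mathbf x-\mathbf n|\le r\}.
\]
Applying Poisson summation on $d\mathbb Z^2$ gives the formal expansion
\[
G(r;\mathbf x)\sim\sum_{d}\frac{\mu(d)}{d^2}\sum_{\mathbf k\in\mathbb Z^2}\widehat{\chi_r}(\mathbf k/d)\,e(\mathbf k\cdot\mathbf x/d),
\]
where $\widehat{\chi_r}(\xi)=r J_1(2\pi r|\xi|)/|\xi|$.

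Next collect frequencies in lowest terms, $\xi=\mathbf a/q$ with $\gcd(\mathbf a,q)=1$. Only squarefree $q$ contribute, since we need $q\mid d$ and $\mu(d)\neq0$. The coefficient of $e(\xi\cdot\mathbf x)$ is $\widehat{\chi_r}(\xi)\,c(q)$, where
\[
c(q)=\sum_{q\mid d}\frac{\mu(d)}{d^2}=\frac{6}{\pi^2}\,\frac{\mu(q)}{q^2}\prod_{p\mid q}\Bigl(1-\frac1{p^2}\Bigr)^{-1}.
\]
The zero frequency gives the mean $6r^2/\pi$. The series does not converge absolutely, so I would first smooth: convolve $\chi_r$ with a mollifier at scale $\varepsilon$, which sandwiches $G$ between smoothed counts at radii $r\pm\varepsilon$. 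I would also truncate $d\le D$ and $|\mathbf a|\le A$ (this is the ``numerator truncation'' mentioned in the abstract). The tails should be controlled in $B^2$ mean by $o(r\log r)$, using the mean-square bound for the tail of the M\"obius sum together with $\widehat{\chi_r}(\xi)\ll r^{1/2}|\xi|^{-3/2}$.

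\emph{Step 2: variance.} By Parseval for almost periodic functions,
\[
\lim_{R\to\infty}\frac1{\pi R^2}\int_{|\mathbf x|\le R}\Bigl(G-\tfrac{6r^2}{\pi}\Bigr)^2\,d\mathbf x=\sum_{\xi\ne0}|c(q)|^2|\widehat{\chi_r}(\xi)|^2 .
\]
Insert the Bessel asymptotic $J_1(z)^2\approx\frac{2}{\pi z}\cos^2(z-3\pi/4)$, valid for $r|\xi|\gg1$, and average $\cos^2$ to $\tfrac12$; the oscillating part should be negligible after summing over $\xi$. This gives terms of size
\[
\frac{r}{\pi^2}\,|\xi|^{-3}\,|c(q)|^2=\frac{r}{\pi^2}\,\frac{q^3}{|\mathbf a|^3}\,|c(q)|^2 .
\]
Summing over $\mathbf a$ primitive to $q$ produces $\sum_{\mathbf m\neq0}|\mathbf m|^{-3}$ times a local factor $\prod_{p\mid q}(1-p^{-3})$. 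The remaining sum over $q$ is $\sum_q \mu^2(q)\,g(q)/q$ with $g$ multiplicative and $g(p)\to1$. It runs up to $q\asymp r$, because the asymptotic regime $r|\xi|\ge1$ fails beyond $q\approx r|\mathbf a|$, and the terms with $|\xi|<1/r$ contribute $O(r)$ in total. This sum therefore equals $C\log r+O(1)$, where $C$ is the product of $(1+g(p)/p)(1-1/p)$ over primes. Combining $(6/\pi^2)^2$, the factor $\tfrac12$, the factor $\pi^{-2}$ and the Euler product should reproduce (1.2), with the local factors collapsing to $\prod_p(1-(p+1)^{-2})$. Checking this bookkeeping is routine.

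\emph{Step 3: higher moments.} For the truncated, normalized sum, the $k$-th moment equals the sum over $\xi_1+\dots+\xi_k=0$ of products of coefficients. The pairings $\xi_i=-\xi_j$ give $(k-1)!!\,(\sigma^2r\log r)^{k/2}(1+o(1))$. The main obstacle is showing that non-pairing zero-sum relations contribute $o((r\log r)^{k/2})$. Writing $\xi_i=\mathbf a_i/q_i$, a relation $\sum\mathbf a_i/q_i=0$ forces each prime dividing some $q_i$ to divide at least two of the $q_i$; this follows from the squarefree, lowest-terms structure. With the numerators bounded by $A$, I would count tuples of squarefree denominators up to $r$ satisfying this divisibility constraint. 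Each genuinely non-diagonal configuration should lose at least a factor of one $\log r$, or a power of $r$, relative to the diagonal: the weights are $\prod q_i^{-1/2}$ roughly, and the constraint forces shared prime factors beyond those of a pairing.

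\emph{Step 4: conclusion.} Finally, let $A,D\to\infty$ slowly with $r$, so that the truncation errors are small in $L^2$ after normalization. Moment convergence for the truncated sums, combined with the $L^2$ approximation and the smoothing sandwich, gives the distributional limit (1.3).
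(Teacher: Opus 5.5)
Your outline has the same architecture as the paper: M\"obius plus Poisson expansion with coefficients $\widehat{\chi_r}(\xi)c(q)$, numerator truncation, Gaussian main term from pairings, and $L^2$ removal of the truncation. However, Step~3, which you rightly call the main obstacle, does not go through with the input you propose. The fact that every prime dividing some $q_i$ divides at least two of them gives no saving by itself. For $s=3$, write $q_1=d_{12}d_{13}e$, $q_2=d_{12}d_{23}e$, $q_3=d_{13}d_{23}e$. Then $\prod q_i^{-1/2}=(d_{12}d_{13}d_{23})^{-1}e^{-3/2}$, and summing over $q_i\le r$ gives order $(\log r)^3$; for general $s$ it gives order $(\log r)^{\binom{s}{2}}$. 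What you need is $o((\log r)^{s/2})$, and $\binom{s}{2}>s/2$ for $s\ge3$, so this count loses against the diagonal instead of gaining a $\log r$. Note also that in an exact pairing $q_i=q_j$, so every prime is shared there too; ``shared prime factors beyond those of a pairing'' does not describe an extra constraint.

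The missing idea is to use the linear relation $\sum_i\mathbf m_i/q_i=\mathbf 0$ itself, not just its reduction mod $p$. In the paper's Lemma~4.2 one takes $q_i$ maximal and finds $j$ with $d=\gcd(q_i,q_j)\ge q_i^{1/(s-1)}\ge P^{c}$, where $P=q_1\cdots q_s$ and $c=1/(s(s-1))$. Writing $q_i=da$, $q_j=db$, the relation becomes $d^{-1}(\mathbf m_i/a+\mathbf m_j/b)=-\sum_{\ell\ne i,j}\mathbf m_\ell/q_\ell$. The bracket is nonzero because no two frequencies are opposite, so $d$ is determined by the other data. Together with the squarefull shape $P=u^3v^2$, this yields $\#\{P\le T\}\ll_{s,\varepsilon}T^{1/2-c/2+\varepsilon}$. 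Hence $\sum P^{-1/2}\ll_s1$ uniformly in the numerators, and the unpaired part of a moment carries no power of $\log r$ at all.

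Two further points need repair. First, $D$ cannot tend to infinity slowly. As your Step~2 says, the $\log r$ comes from reduced denominators spread over $1\le q\lesssim r$. Truncating the M\"obius variable at $d\le D$ leaves a function of mean square $O(r\log D)$. So with $\log D=o(\log r)$, the discarded tail carries essentially all of the variance $\sim\sigma^2r\log r$, contrary to what Step~1 requires. The paper does not truncate denominators at all: for a fixed numerator, $|H(\mathbf m/q)|\ll r/q^2$ once $q>r|\mathbf m|$, so the $q$-sum already converges absolutely. Numerator truncation alone therefore suffices, and your mollifier becomes unnecessary. This is exactly why the count in Step~3 must be uniform in arbitrarily large denominators.

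Second, the claim in Step~2 that the oscillating part is negligible is the analytic core of the constant $\sigma^2$, and averaging $\cos^2$ term by term does not justify it. You need $\sum_{q\le T}\frac{a_{\mathbf m}(q)}{q}\sin(4\pi T/q)=o(\log T)$ with $T=r|\mathbf m|$. The paper's Lemma~5.1 writes $a_{\mathbf m}=\mathbf 1*b_{\mathbf m}$ with $\sum_d|b_{\mathbf m}(d)|/d<\infty$ to reduce to $\sum_{n\le T}n^{-1}e(2T/n)$. It bounds the ranges $n\le T^\eta$ and $n>T^{1-\eta}$ trivially by $O(\eta\log T)$. On the dyadic blocks in between, it gets a power saving from a higher-derivative exponential sum estimate (Heath-Brown). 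Some argument of this kind must be supplied.
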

	
	The proof uses a truncation of the numerators of rational frequencies, so that the truncated moment expansions are absolutely convergent series.
	The fully paired terms yield the Gaussian moments, while the unpaired terms are controlled by counting squarefree denominators in zero-sum relations.
	Finally, a second-moment error estimate is used to remove the truncation.
	The denominator-counting method follows Konyagin--Korolev~\cite{KK2016};
	the sums arising in the variance calculation are handled using the higher derivative estimates of Heath-Brown~\cite{HB2016}.
	
	Throughout, $p$ denotes a prime, $\mu$ denotes the M\"obius function,
	$\log$ denotes the natural logarithm, and $\mathbf1_A$ denotes the indicator function of a set $A$.
	The constants implicit in $O$ and $\ll$ without subscripts are absolute; subscripts indicate the parameters on which these constants may depend.
	
	\section{Notation and Propositions}
	
	For $r>0$ and an integer $k\ge1$, let
	\[
	X_r(\mathbf x)=\frac{G(r;\mathbf x)}r-\frac{6r}{\pi},
	\qquad
	\mathcal M_k(r,R)=\frac1{\pi R^2}
	\int_{|\mathbf x|\le R}X_r(\mathbf x)^k\,d\mathbf x.
	\tag*{(2.1)}
	\]
	For a function $f:\mbb R^2\to\mbb C$, we denote its mean by
	\[
	\langle f\rangle
	=\lim_{R\to\infty}\frac1{\pi R^2}
	\int_{|\mathbf x|\le R}f(\mathbf x)\,d\mathbf x.
	\tag*{(2.2)}
	\]
	
	For $\xi\in\mathbb Q^2\setminus\{\mathbf0\}$, let $d(\xi)$ be the least positive integer such that
	$d(\xi)\xi\in\mathbb Z^2$.
	Then $\xi$ has a unique representation as $\mathbf m/q$, where
	$q=d(\xi)$, $\mathbf m\in\mathbb Z^2\setminus\{\mathbf0\}$, and
	$\gcd(m_1,m_2,q)=1$.
	Set $H_r(0)=0$ and define
	\[
	H(\xi)=H_r(\xi)
	=\frac6{\pi^2}
	\frac{\mu(d(\xi))}{\displaystyle\prod_{p\mid d(\xi)}(p^2-1)}
	\frac{J_1(2\pi r|\xi|)}{|\xi|}
	\qquad(\xi\ne\mathbf0),
	\tag*{(2.3)}
	\]
	where $J_1$ is the Bessel function of the first kind of order one.
	It is immediate that $H(-\xi)=H(\xi)\in\mathbb R$; if $d(\xi)$ is not squarefree, then $H(\xi)=0$.
	
	For $z\ge2$ and an integer $N\ge1$, let
	\[
	Q_z=\prod_{p\le z}p,
	\qquad
	w_N(\mathbf m)=\prod_{1\le\ell\le2}
	\left(1-\frac{|m_\ell|}{N+1}\right)_+,
	\tag*{(2.4)}
	\]
	where $t_+=\max\{t,0\}$, and define the finite sum
	\[
	\mathcal C_k(z,N)
	=\sum_{\substack{
			\mathbf m_1,\ldots,\mathbf m_k\in\mathbb Z^2\\
			\|\mathbf m_j\|_\infty\le N\ (1\le j\le k)\\
			\mathbf m_1+\cdots+\mathbf m_k=\mathbf0}}
	\prod_{1\le j\le k}
	\left[w_N(\mathbf m_j)
	H\!\left(\frac{\mathbf m_j}{Q_z}\right)\right].
	\tag*{(2.5)}
	\]
	
	For $r\ge3$ and $M\ge1$, define the numerator truncation
	\[
	X_{r,M}(\mathbf x)
	=\sum_{\substack{q\ge1\\q\text{ squarefree}}}
	\sum_{\substack{\mathbf m\in\mathbb Z^2\\0<|\mathbf m|\le M\\
			\gcd(m_1,m_2,q)=1}}
	H\!\left(\frac{\mathbf m}{q}\right)
	e\!\left(-\frac{\mathbf m\cdot\mathbf x}{q}\right).
	\tag*{(2.6)}
	\]
	The convergence of the series (2.6) is established in Lemma~\ref{lem:tail}.
	
	\begin{proposition}\label{prop:fourier}
		For every $r>0$ and every integer $k\ge1$, the limit $\lim_{R\to\infty}\mathcal M_k(r,R)$ exists, and
		\[
		\langle X_r^k\rangle
		=\lim_{z\to\infty}\lim_{N\to\infty}\mathcal C_k(z,N).
		\tag*{(2.7)}
		\]
		In particular,
		\[
		\langle X_r\rangle=0,
		\qquad
		\langle X_r^2\rangle=\sum_{\xi\in\mathbb Q^2\setminus\{\mathbf0\}}|H(\xi)|^2<\infty,
		\tag*{(2.8)}
		\]
		The series for the second moment is absolutely convergent in the usual sense.
	\end{proposition}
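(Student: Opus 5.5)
The plan is to replace the non-periodic function $G(r;\cdot)$ by a periodic sieved version, compute the moments of that version exactly by Fourier analysis on a torus, and then let the sieve level $z\to\infty$.

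\textbf{Periodic approximation.} For $z\ge2$, let
\[
\mathbb V_z=\{\mathbf n\in\mathbb Z^2:\ p\nmid\gcd(n_1,n_2)\text{ for all }p\le z\},
\qquad
G_z(r;\mathbf x)=\#\{\mathbf n\in\mathbb V_z:|\mathbf x-\mathbf n|\le r\}.
\]
Then $\mathbf 1_{\mathbb V_z}(\mathbf n)=\sum_{d\mid Q_z}\mu(d)\mathbf 1_{d\mid \mathbf n}$. This function is $Q_z\mathbb Z^2$-periodic, so $G_z(r;\cdot)$ is bounded and $Q_z\mathbb Z^2$-periodic. Every moment of any polynomial in $G_z$ therefore has a mean $\langle\cdot\rangle$, equal to the average over one period.

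Since $\mathbb V\subset\mathbb V_z$, we have $0\le G_z-G\le\sum_{p>z}\#\{\mathbf n:p\mid\mathbf n,\ |\mathbf x-\mathbf n|\le r\}$. Averaging over $|\mathbf x|\le R$ and letting $R\to\infty$, the count for each $p$ has mean exactly $\pi r^2/p^2$. Hence
\[
\limsup_{R\to\infty}\frac1{\pi R^2}\int_{|\mathbf x|\le R}(G_z-G)\,d\mathbf x\le \pi r^2\sum_{p>z}p^{-2}\to0 .
\]
(For $p>2r$ at most one multiple of $p$ lies in the disk, so the sum over $p$ can be interchanged with the limit in $R$ by dominated convergence.) Both $G$ and $G_z$ are $O(r^2+1)$, so $|a^k-b^k|\le k\max(|a|,|b|)^{k-1}|a-b|$ gives
\[
\limsup_R\Bigl|\mathcal M_k(r,R)-\tfrac1{\pi R^2}\textstyle\int_{|\mathbf x|\le R}X_{r,z}^k\Bigr|\ll_{k,r}\sum_{p>z}p^{-2},
\qquad X_{r,z}=G_z/r-6r/\pi .
\]
Consequently $\limsup_R\mathcal M_k-\liminf_R\mathcal M_k\to0$ as $z\to\infty$, so $\lim_R\mathcal M_k(r,R)$ exists and equals $\lim_{z\to\infty}\langle X_{r,z}^k\rangle$.

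\textbf{Fourier coefficients on the torus.} Write $G_z=a*\mathbf 1_{\{|\cdot|\le r\}}$ with $a=\mathbf 1_{\mathbb V_z}$ restricted to the lattice. At a frequency $\xi=\mathbf m/Q_z$ with $q=d(\xi)$, the coefficient of $a$ is
\[
\sum_{d:\ q\mid d\mid Q_z}\mu(d)d^{-2}=c_z\,\frac{\mu(q)}{\prod_{p\mid q}(p^2-1)},
\qquad c_z=\prod_{p\le z}(1-p^{-2}).
\]
Multiplying by $\widehat{\mathbf 1}_{\rm disk}(\xi)=rJ_1(2\pi r|\xi|)/|\xi|$ and dividing by $r$, the coefficient of $X_{r,z}$ at $\xi\neq\mathbf0$ is $\lambda_zH(\xi)$, with $\lambda_z=c_z\pi^2/6\to1$. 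The zero coefficient is $\delta_z=\pi r c_z-6r/\pi\to0$.

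Next, $\sum_{\mathbf m}w_N(\mathbf m)(\cdots)$ is the Fejér mean $F_N$ of the fluctuating part $Y_z=X_{r,z}-\delta_z$ on $\mathbb R^2/Q_z\mathbb Z^2$. Since $Y_z$ is bounded, $F_N\to Y_z$ in $L^2$ while $\|F_N\|_\infty\le\|Y_z\|_\infty$, so $F_N\to Y_z$ in every $L^k$ of the torus. Expanding $\int F_N^k$ over the torus gives exactly $\lambda_z^k\,\mathcal C_k(z,N)$. Hence $\lim_N\mathcal C_k(z,N)=\lambda_z^{-k}\langle Y_z^k\rangle$ exists. Finally,
\[
\langle X_{r,z}^k\rangle=\sum_j\binom kj\delta_z^{k-j}\langle Y_z^j\rangle,
\]
and the moments $\langle Y_z^j\rangle$ are bounded uniformly in $z$ (since $\|Y_z\|_\infty\ll_r1$). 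Together with $\lambda_z\to1$ and $\delta_z\to0$, this yields (2.7).

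\textbf{The cases $k=1,2$, and the main obstacle.} For $k=1$, $\mathcal C_1(z,N)=w_N(\mathbf0)H(\mathbf0)=0$, so $\langle X_r\rangle=0$. For $k=2$, Parseval on the torus gives
\[
\langle Y_z^2\rangle=\lambda_z^2\sum_{\xi\neq\mathbf0,\ d(\xi)\mid Q_z}H(\xi)^2 .
\]
These sums increase to $\sum_{\xi\ne\mathbf0}H(\xi)^2$ by monotone convergence. They are also bounded by $\sup_z\|Y_z\|_\infty^2<\infty$, so the full series converges absolutely and equals $\langle X_r^2\rangle$; this is (2.8). The one genuinely delicate point is the uniform-in-$R$ control of $G_z-G$ in the first step, namely interchanging the sum over primes $p>z$ with the limit $R\to\infty$. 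I would handle it via the pointwise bound $\#\{\mathbf n:p\mid\mathbf n,\ |\mathbf x-\mathbf n|\le r\}\le1$ for $p>2r$ and an exact computation of the mean for each fixed $p$; everything else is finite-dimensional Fourier analysis.
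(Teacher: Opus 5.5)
Your proposal is correct and follows essentially the paper's route: sieve to the $Q_z\mathbb Z^2$-periodic count $G_z$, identify its torus Fourier coefficients with $H$, pass through product Fej\'er means, control $G_z-G$ in mean uniformly in $R$, and finish with Parseval and monotone convergence. The one structural difference is that the paper rescales by $c_z=\prod_{p>z}(1-p^{-2})$ so that the coefficients are exactly $H(\xi)$ with zero constant term, whereas you carry $\lambda_z\to1$ and $\delta_z\to0$ explicitly.

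The step you flagged as delicate needs a small repair. You must restrict the bound on $G_z-G$ to $\mathbf n\neq\mathbf 0$. This is legitimate because $\mathbf 0\notin\mathbb V_z$, and it is necessary because otherwise $\mathbf 0$ is counted for every $p>z$ and the right-hand side is infinite whenever $|\mathbf x|\le r$. The summable majorant should then come from the uniform count $\#\{\mathbf n\in p\mathbb Z^2\setminus\{\mathbf 0\}:|\mathbf n|\le R+r\}\ll (R+r)^2/p^2$ for $R\ge1$, as in (3.13)--(3.14). The pointwise bound for $p>2r$ alone does not supply a majorant that is uniform in $R$.
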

	
	\begin{proposition}\label{prop:moments}
		Suppose that $r\ge3$ and $1\le M\le\log r$.
		For every fixed integer $k\ge3$,
		\[
		\langle X_{r,M}^k\rangle
		=\begin{cases}
			\displaystyle
			\frac{k!}{2^{k/2}(k/2)!}\langle X_{r,M}^2\rangle^{k/2}
			+O_k\!\left(\frac{M^2(\log r)^{k/2-2}}{r^{k/2}}\right),
			& k\text{ is even},\\[6pt]
			\displaystyle
			O_k\!\left(\frac{M^{3/2}(\log r)^{(k-3)/2}}{r^{k/2}}\right),
			& k\text{ is odd}.
		\end{cases}
		\tag*{(2.9)}
		\]
		Moreover, $\langle X_{r,M}\rangle=0$.
	\end{proposition}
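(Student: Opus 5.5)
Since the numerators are bounded, $|\mathbf m|\le M$, I will treat $X_{r,M}$ as an absolutely convergent almost periodic series and expand its $k$-th moment directly.

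\emph{Step 1: the moment expansion.} Lemma~\ref{lem:tail} gives the convergence of (2.6). Using the estimate $|H(\mathbf m/q)|\ll q^{-2}\,|\xi|^{-3/2}r^{-1/2}$ with $|\xi|=|\mathbf m|/q$, one gets $|H(\mathbf m/q)|\ll r^{-1/2}q^{-1/2}|\mathbf m|^{-3/2}$ times $\prod_{p\mid q}p^2/(p^2-1)$. Summing over $q$ with fixed $\mathbf m$ then diverges only logarithmically. So I will instead use the factor $J_1(2\pi r|\xi|)/|\xi|\ll \min(r,\,r^{-1/2}|\xi|^{-3/2})$. Large $q$ gives small $|\xi|$, where $J_1(2\pi r|\xi|)/|\xi|\ll r$, and the factor $q^{-2}$ times the count $M^2$ makes the sum over $q$ converge absolutely for each $r$. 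Hence $X_{r,M}$ is a uniform limit of trigonometric polynomials, and
\[
\langle X_{r,M}^k\rangle=\sum_{\xi_1+\cdots+\xi_k=\mathbf 0}\ \prod_{j} H(\xi_j),
\]
where each $\xi_j=\mathbf m_j/q_j$ has $q_j$ squarefree and $0<|\mathbf m_j|\le M$. The case $k=1$ has no solutions, since each $\xi_j\ne\mathbf 0$, so $\langle X_{r,M}\rangle=0$.

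\emph{Step 2: paired and unpaired terms.} I split the $k$-tuples by the partition of indices generated by the relations $\xi_i=-\xi_j$.
\begin{itemize}
\item \emph{Paired tuples, $k$ even.} These are perfect matchings with $\xi_i=-\xi_j$ on each pair. Summing over all such tuples without distinctness constraints gives exactly $\frac{k!}{2^{k/2}(k/2)!}\langle X_{r,M}^2\rangle^{k/2}$. The overcounting from coincident frequencies is bounded by terms in which some frequency appears four times. These contribute $\ll \sum|H|^4\cdot(\sum|H|^2)^{k/2-2}$.
\item \emph{Variance size.} I expect $\sum|H|^2\asymp r^{-1}\log r$, consistent with the $r\log r$ variance after rescaling.
\item \emph{Quartic sum.} I expect $\sum|H|^4\ll M^2 r^{-2}$. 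The numerator count gives $M^2$, the factor $\xi$-decay gives $r^{-2}$, and the sum over $q$ converges.
\item \emph{Resulting error.} These give the even error $M^2(\log r)^{k/2-2}r^{-k/2}$.
\end{itemize}

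\emph{Step 3: unpaired terms (main obstacle).} Any block of the partition that is not a pair $\{\xi,-\xi\}$ contains a minimal zero-sum relation $\xi_1+\cdots+\xi_s=\mathbf 0$ with $s\ge3$ and no vanishing proper subsum. I will follow Konyagin--Korolev.
\begin{itemize}
\item \emph{Prime constraint.} In a minimal vanishing sum of fractions with squarefree denominators, every prime dividing some $q_j$ must divide at least two of the $q_j$. Otherwise reducing modulo $p$ gives a contradiction, because $p\nmid\gcd(\mathbf m_j)$.
\item \emph{Consequence.} Writing $q_j$ through the lcm $L$, each $q_j$ is then determined up to $\tau$-type factors by the others. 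The count of denominator tuples is therefore controlled by a product that loses one factor of $q^{1/2}$-decay relative to the free product.
\item \emph{Cluster bound.} Inserting $|H(\mathbf m/q)|\ll r^{-1/2}q^{1/2}|\mathbf m|^{-3/2}\prod_{p\mid q}(p^2-1)^{-1}p^{2}\cdot q^{-2}$, I aim to show that a minimal cluster of size $s\ge3$ contributes $O(M^{3/2}r^{-s/2})$, with no logarithm. The logarithm arises only from the diagonal $\sum|H|^2$.
\item \emph{Assembling.} Combining clusters with the remaining pairs, each pair costing $r^{-1}\log r$, gives the worst case of one 3-cluster and $(k-3)/2$ pairs for odd $k$. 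This is exactly the stated bound. For even $k$, a 4-cluster with $k/2-2$ pairs gives $M^2(\log r)^{k/2-2}r^{-k/2}$.
\end{itemize}

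The hardest part will be this denominator count. It requires showing that the sum over squarefree $q_1,\dots,q_s$ satisfying the prime constraint, weighted by $\prod q_j^{-3/2}$ and the numerator factors, converges with only a power of $M$ lost. The dependence on $M$ enters through $\sum_{|\mathbf m|\le M}|\mathbf m|^{-3/2}\ll M^{1/2}$ per free numerator. The zero-sum condition reduces the numerator freedom enough to give $M^{3/2}$ or $M^2$, using $M\le\log r$ to absorb any remaining losses.
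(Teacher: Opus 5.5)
\textbf{There is a genuine gap: the denominator-counting bound for unpaired tuples, which carries the whole proof, is left unproved, and the mechanism you sketch for it cannot work.}

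Your outer structure matches the paper's:
\begin{itemize}
\item expand $\langle X_{r,M}^k\rangle$ over zero-sum tuples;
\item peel off opposite pairs, each costing $\langle X_{r,M}^2\rangle\ll r^{-1}\log r$;
\item bound the overcounting of fully paired tuples via a frequency occurring at least four times (in fact $\sum|H|^4\ll r^{-2}$, without your factor $M^2$);
\item use $M\le\log r$ to see that a single $3$-block (odd $k$) or $4$-block (even $k$) dominates.
\end{itemize}
All of this rests on the bound $\ll_s M^{s/2}r^{-s/2}$ for the absolute sum over zero-sum $s$-tuples with no opposite pair. It should be $M^{s/2}$, not $M^{3/2}$ for every $s$; you yourself use $M^2$ at $s=4$. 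That bound is exactly what you leave unproved. It looks easier than it is because in Step~3 you weight by $\prod q_j^{-3/2}$. The correct bound, which you had in Step~1 and which is (4.2), is $|H(\mathbf m/q)|\ll r^{-1/2}q^{-1/2}|\mathbf m|^{-3/2}$. With $q^{-3/2}$ the denominator sum would converge with no arithmetic input at all. With $q^{-1/2}$ you need $\sum_{\mathcal U}(q_1\cdots q_s)^{-1/2}\ll_s1$ uniformly in $\mathbf m_1,\dots,\mathbf m_s$, which is Lemma~\ref{lem:count}. No saving in the numerators is required, only this uniformity.

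Your sketched mechanism is that every prime of $P=q_1\cdots q_s$ divides at least two $q_i$, so each $q_j$ divides the lcm of the others. This cannot give the needed bound. It only says every prime exponent of $P$ is at least $2$. The sum of $P^{-1/2}$ over squarefree tuples with that property is the Euler product $\prod_p\bigl(1+\binom{s}{2}p^{-1}+O_s(p^{-3/2})\bigr)$, which diverges. Truncating at $q_i\ll r|\mathbf m_i|$ via $J_1(t)\ll t$ would still cost powers of $\log r$. That is fatal, because (2.9) saves only $(M/\log r)^{3/2}$ or $(M/\log r)^2$ relative to $(\log r/r)^{k/2}$.

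The missing idea is to use the zero-sum equation itself, together with the no-opposite-pair condition, to gain a power of $P$. The paper does this as follows:
\begin{itemize}
\item Write $P=u^3v^2$ with $u$ squarefree, and set $c=1/(s(s-1))$. Tuples with $u>T^c$ are counted directly.
\item Otherwise take $q_i$ maximal and $j$ with $d=\gcd(q_i,q_j)\ge P^c$, and write $q_i=da$, $q_j=db$.
\item Since $\mathbf m_i/a+\mathbf m_j/b\ne\mathbf 0$, equation (4.13) determines $d$ from $a$, $b$ and the remaining $q_\ell$, while $P/d^2=uw^2\le T^{1-2c}$.
\item This gives $\#\{\mathbf q\in\mathcal U:P\le T\}\ll_{s,\varepsilon}T^{1/2-c/2+\varepsilon}$, and dyadic summation yields $\sum_{\mathcal U}P^{-1/2}\ll_s1$.
\end{itemize}
Without an argument of this kind, the unpaired contribution, and hence (2.9), is not established.
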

	
	\begin{proposition}\label{prop:variance}
		As $r\to\infty$,
		\[
		\langle X_r^2\rangle=\sigma^2\frac{\log r}r
		+o\!\left(\frac{\log r}r\right),
		\tag*{(2.10)}
		\]
		where $\sigma^2$ is defined in (1.2).
	\end{proposition}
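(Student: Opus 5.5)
The plan is to start from the Parseval-type formula (2.8) of Proposition~\ref{prop:fourier} and evaluate the series directly. Writing $\xi=\mathbf m/q$ with $q$ squarefree and $\gcd(m_1,m_2,q)=1$, we get
\[
\langle X_r^2\rangle=\frac{36}{\pi^4}\sum_{q\ \mathrm{squarefree}}\frac{q^2}{\prod_{p\mid q}(p^2-1)^2}\sum_{\substack{\mathbf m\ne\mathbf0\\ \gcd(m_1,m_2,q)=1}}\frac{J_1(2\pi r|\mathbf m|/q)^2}{|\mathbf m|^2}.
\]
The Bessel factor has argument $x=2\pi r|\mathbf m|/q$, and I split the terms at $x=A$ and $x=1/A$ for a slowly growing parameter $A$.

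In the range $x\le 1/A$ (that is, $q\gg Ar|\mathbf m|$), use $J_1(x)\ll x$. The term is then $\ll r^2 q^{-2}/\prod(p^2-1)^2\approx r^2q^{-4}$. Summing over $q\gg Ar|\mathbf m|$ and then over $\mathbf m$ gives $O(A^{-3}r^{-1})$, which is negligible. In the transition range $1/A<x<A$, use $J_1(x)^2\ll \min(x^2,1/x)$. The term has size $\ll q^{3}/(r|\mathbf m|^3\prod(p^2-1)^2)\approx 1/(qr|\mathbf m|^3)$. Since $q$ runs over an interval of ratio $A^2$, this contributes $O(r^{-1}\log A)$, which is $o(\log r/r)$ provided $\log A=o(\log r)$.

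In the main range $x\ge A$, use $J_1(x)^2=\frac{2}{\pi x}\cos^2(x-\tfrac{3\pi}4)+O(x^{-2})$ together with $\cos^2=\tfrac12+\tfrac12\cos(2\cdot)$. The $O(x^{-2})$ error contributes $\ll\sum q^4/(r^2|\mathbf m|^4\prod(p^2-1)^2)\approx\sum q^{0}/(r^2|\mathbf m|^4)$ over $q\le r|\mathbf m|/A$, which is $O(1/(Ar))$. The non-oscillating half gives
\[
\frac{18}{\pi^6 r}\sum_{\mathbf m\ne\mathbf 0}\frac1{|\mathbf m|^3}\sum_{\substack{q\le 2\pi r|\mathbf m|/A\\ \gcd(m_1,m_2,q)=1}}\mu^2(q)\,\frac{q^3}{\prod_{p\mid q}(p^2-1)^2}.
\]
Swapping the order of summation, the coprimality condition gives $\sum_{\gcd(\mathbf m,q)=1}|\mathbf m|^{-3}=\zeta_{\mathbb Z^2}(3)\prod_{p\mid q}(1-p^{-3})$, up to the effect of the $\mathbf m$-dependent cutoff, which only shifts $\log$ by $O(\log|\mathbf m|+\log A)$ and is harmless since $\sum|\mathbf m|^{-3}\log|\mathbf m|<\infty$. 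So the inner weight becomes the multiplicative function $f(q)=\mu^2(q)\prod_{p\mid q}\frac{p^3-1}{(p^2-1)^2}$, for which $pf(p)=1+O(p^{-2})$. A standard convolution argument (write $qf(q)=\sum_{d\mid q}g(d)$ with $\sum|g(d)|/d<\infty$) gives $\sum_{q\le Y}f(q)=C\log Y+O(1)$, with
\[
C=\prod_p\Bigl(1-\frac1p\Bigr)\Bigl(1+f(p)\Bigr)=\prod_p\Bigl(1-\frac1{(p+1)^2}\Bigr).
\]
The last identity is a direct Euler-factor computation: $(1-1/p)\bigl(1+\frac{p^2+p+1}{(p+1)^2(p-1)}\bigr)=\frac{(p+1)^2-1}{(p+1)^2}$. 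With $Y\asymp r|\mathbf m|/A$ and $\log A=o(\log r)$, this yields exactly $\sigma^2\log r/r$.

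The main obstacle is the oscillating half. It has the shape
\[
\frac{1}{r}\sum_{\mathbf m}\frac{1}{|\mathbf m|^3}\sum_{q\le r|\mathbf m|/A} qf(q)\,\frac{1}{q}\,\sin\!\Bigl(\frac{4\pi r|\mathbf m|}{q}\Bigr),
\]
and I must show the inner $q$-sum is $o(\log r)$. My first attempt is the following. Expand $qf(q)=\sum_{d\mid q}g(d)$ with $g$ supported on squarefree $d$ and $g(p)=O(p^{-2})$. This reduces matters to sums $\sum_{n\le Y/d} n^{-1}\sin(4\pi r|\mathbf m|/(dn))$ over an arithmetic progression, which I split dyadically in $n\sim N$.

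The phase $\phi(n)=T/n$ with $T=4\pi r|\mathbf m|/d$ has derivatives $|\phi^{(j)}|\asymp T N^{-j-1}$. For $N\ge T^{1/2+\epsilon}$ the first derivative is small but $\gg N^{-1}$ times something large, so Kusmin--Landau applies and gives $\ll N^2/T$. For the remaining ranges I would use the $k$-th derivative test in Heath-Brown's form~\cite{HB2016}, choosing $k$ according to $\log T/\log N$, to get a saving $N^{1-\delta}$ for each dyadic block with $N\ge T^{\epsilon}$. Blocks with $N<T^\epsilon$ are bounded trivially by $O(\epsilon\log r)$.

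Summing the dyadic blocks gives $O(\epsilon\log r+O_\epsilon(1))$. Summing over $d$ (using $\sum|g(d)|/d<\infty$) and over $\mathbf m$ then gives $o(\log r/r)$, and letting $\epsilon\to0$ completes the argument.
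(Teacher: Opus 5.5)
Your argument is correct. It has the same skeleton as the paper's: Parseval, the asymptotic $J_1(x)^2=(1-\sin 2x)/(\pi x)+O(x^{-2})$, a convolution mean value for the non-oscillating part, and the convolution identity plus Heath-Brown's $k$-th derivative test for the $\sin$ part. The genuine difference is how you treat the sum over $\mathbf m$.

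The paper fixes $\mathbf m$ and obtains $\kappa_{\mathbf m}\log(r|\mathbf m|)$. It then truncates to $|\mathbf m|\le M$, so the exponential-sum estimate is needed only for finitely many $\mathbf m$ and no uniformity is required. The range $|\mathbf m|>M$ is handled by the tail bound (4.1). Finally, a separate averaging argument, (5.16a)--(5.17), evaluates $\sum_{\mathbf m}\kappa_{\mathbf m}|\mathbf m|^{-3}$.

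You instead interchange the $\mathbf m$- and $q$-sums in the positive main term. The Euler product $\prod_p(1-(p+1)^{-2})$ then comes out directly from the mean value of a single multiplicative function $f$, which is shorter for the constant. The cost is that you must estimate the oscillating half uniformly over all $\mathbf m$. Your route also does not by itself give the truncated asymptotic (5.15), which the paper reuses in Section~6. Using Kusmin--Landau for $n\gg\sqrt T$ (that range then contributes $O(1)$), rather than the paper's trivial bound on $n>T^{1-\eta}$, is a harmless variant.

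One step needs correcting. In the oscillating half the weight cannot be $f(q)$. The factor $\prod_{p\mid q}(1-p^{-3})$ arises only after summing over $\mathbf m$ at fixed $q$, and you cannot do that once the phase $\sin(4\pi r|\mathbf m|/q)$ depends on $|\mathbf m|$. The correct weight is $a_{\mathbf m}(q)/q$ with $a_{\mathbf m}(q)=\mu^2(q)h(q)^2\mathbf 1_{\{\gcd(m_1,m_2,q)=1\}}$, as in (5.7). Its coefficients in $a_{\mathbf m}=\mathbf 1*b_{\mathbf m}$ satisfy $b_{\mathbf m}(p)=-1$ for $p\mid\gcd(m_1,m_2)$, and $b_{\mathbf m}(p^2)=-h(p)^2\ne0$ for the other primes. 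So they are neither supported on squarefrees nor $O(p^{-2})$ at every prime.

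Your $g$ is not supported on squarefrees either, since $g(p^2)=-pf(p)$. For the mean value $\sum_{q\le Y}f(q)=C\log Y+O(1)$, what you actually need is $\sum_d|g(d)|d^{-\alpha}<\infty$ for some $\alpha<1$, which holds for every $\alpha>1/2$. In the oscillating part you need two things:
\begin{itemize}
\item the bound $\sum_d|b_{\mathbf m}(d)|/d\ll\prod_{p\mid\gcd(m_1,m_2)}(1+p^{-1})\ll\log\log(3|\mathbf m|)$;
\item an estimate $\epsilon\log U+O_\epsilon(1)$ for the inner $n$-sums, uniform in the phase parameter $U=4\pi r|\mathbf m|/d$.
\end{itemize}
The $\log\log$ loss is absorbed by the weight $|\mathbf m|^{-3}\log(r|\mathbf m|)$, so your conclusion stands. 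Alternatively, you can truncate at $|\mathbf m|\le M$ as the paper does.
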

	
	\section{Proof of Proposition 2.1}
	
	\begin{proof}
		Write
		\[
		F(r;\mathbf x)
		=\#\{\mathbf n\in\mathbb Z^2:|\mathbf x-\mathbf n|\le r\}.
		\tag*{(3.1)}
		\]
		The Poisson summation formula gives
		\[
		F(r;\mathbf x)
		=\pi r^2+
		\sum_{\mathbf m\in\mathbb Z^2\setminus\{\mathbf0\}}
		\frac r{|\mathbf m|}J_1(2\pi r|\mathbf m|)
		e(-\mathbf m\cdot\mathbf x),
		\tag*{(3.2)}
		\]
		with equality in $L^2(\mathbb R^2/\mathbb Z^2)$.
		For a fixed $\xi\in\mathbb Q^2\setminus\{\mathbf0\}$, the condition
		$\mathbf m/q=\xi$ is equivalent to $d(\xi)\mid q$ and $\mathbf m=q\xi$.
		Thus, the Euler product gives the absolutely convergent identity
		\[
		\begin{aligned}
			&\sum_{\substack{q\ge1,\ \mathbf m\in\mathbb Z^2\setminus\{\mathbf0\}\\
					\mathbf m/q=\xi}}
			\frac{\mu(q)}{q|\mathbf m|}
			J_1\!\left(\frac{2\pi r|\mathbf m|}{q}\right)\\
			&\qquad=\frac{J_1(2\pi r|\xi|)}{|\xi|}
			\sum_{\substack{q\ge1\\d(\xi)\mid q}}\frac{\mu(q)}{q^2}
			=H(\xi).
		\end{aligned}
		\tag*{(3.3)}
		\]
		
		Define
		\[
		\begin{aligned}
			G_z(r;\mathbf x)
			&=
			\#\left\{
			\mathbf n\in\mathbb Z^2:
			|\mathbf x-\mathbf n|\le r,\,
			\gcd(n_1,n_2,Q_z)=1
			\right\}\\
			&=
			\sum_{q\mid Q_z}
			\mu(q)
			F\left(\frac rq;\frac{\mathbf x}{q}\right),
		\end{aligned}
		\tag*{(3.4)}
		\]
		and let
		\[
		c_z=\prod_{p>z}\left(1-\frac1{p^2}\right),
		\qquad
		\widetilde X_{r,z}(\mathbf x)
		=
		\frac{c_z}{r}G_z(r;\mathbf x)-\frac{6r}{\pi}.
		\tag*{(3.5)}
		\]
		The function $\widetilde X_{r,z}$ is $Q_z\mathbb Z^2$-periodic.
		Since
		\[
		0\le G(r;\mathbf x)\le G_z(r;\mathbf x)
		\le F(r;\mathbf x)\ll_r1,
		\tag*{(3.6)}
		\]
		there is a constant $C_r$, depending only on $r$, such that
		\[
		|X_r(\mathbf x)|\le C_r,
		\qquad
		|\widetilde X_{r,z}(\mathbf x)|\le C_r
		\tag*{(3.7)}
		\]
		for all $\mathbf x\in\mathbb R^2$ and $z\ge2$.
		
		By (3.2), (3.4), and
		$c_z\prod_{p\le z}(1-p^{-2})=6/\pi^2$, 
		the constant term of $\widetilde X_{r,z}$ is zero.
		For $\xi\in Q_z^{-1}\mathbb Z^2\setminus\{\mathbf0\}$,
		we have $d(\xi)\mid Q_z$, and hence
		\[
		\begin{aligned}
			&\frac1{Q_z^2}
			\int_{[0,Q_z]^2}
			\widetilde X_{r,z}(\mathbf x)e(\xi\cdot\mathbf x)\,d\mathbf x\\
			&\quad=
			c_z\frac{J_1(2\pi r|\xi|)}{|\xi|}
			\sum_{\substack{q\mid Q_z\\d(\xi)\mid q}}
			\frac{\mu(q)}{q^2}\\
			&\quad=
			c_z\frac{J_1(2\pi r|\xi|)}{|\xi|}
			\frac{\mu(d(\xi))}{d(\xi)^2}
			\prod_{\substack{p\le z\\p\nmid d(\xi)}}
			\left(1-\frac1{p^2}\right)\\
			&\quad=H(\xi).
		\end{aligned}
		\tag*{(3.8)}
		\]
		
		Let
		\[
		P_{z,N}(\mathbf x)
		=
		\sum_{\substack{
				\mathbf m\in\mathbb Z^2\\
				\|\mathbf m\|_\infty\le N
		}}
		w_N(\mathbf m)
		H\left(\frac{\mathbf m}{Q_z}\right)
		e\left(-\frac{\mathbf m\cdot\mathbf x}{Q_z}\right).
		\tag*{(3.9)}
		\]
		The polynomial $P_{z,N}$ is the two-dimensional product Fej\'er mean of $\widetilde X_{r,z}$.
		For fixed $z$,
		\[
		\|P_{z,N}\|_\infty\le C_r,
		\qquad
		\int_{[0,Q_z]^2}
		|P_{z,N}(\mathbf x)-\widetilde X_{r,z}(\mathbf x)|
		\,d\mathbf x
		\longrightarrow0
		\quad(N\to\infty).
		\tag*{(3.10)}
		\]
		By uniform boundedness, $P_{z,N}^k\to \widetilde X_{r,z}^k$
		in $L^1([0,Q_z]^2)$.
		Therefore, by orthogonality in finite Fourier sums,
		\[
		\begin{aligned}
			m_{z,k}
			&:=
			\frac1{Q_z^2}
			\int_{[0,Q_z]^2}
			\widetilde X_{r,z}(\mathbf x)^k\,d\mathbf x\\
			&=
			\lim_{N\to\infty}
			\frac1{Q_z^2}
			\int_{[0,Q_z]^2}
			P_{z,N}(\mathbf x)^k\,d\mathbf x\\
			&=
			\lim_{N\to\infty}\mathcal C_k(z,N).
		\end{aligned}
		\tag*{(3.11)}
		\]
		
		Write $\varepsilon_z=\sum_{p>z}p^{-2}$.
		Then $0\le1-c_z\le\varepsilon_z$ and $\varepsilon_z\to0$.
		By the definition of $G_z$,
		\[
		0\le G_z(r;\mathbf x)-G(r;\mathbf x)
		\le
		\sum_{p>z}
		\sum_{\mathbf n\in p\mathbb Z^2\setminus\{\mathbf0\}}
		\mathbf1_{\{|\mathbf x-\mathbf n|\le r\}}.
		\tag*{(3.12)}
		\]
		For every $S>0$,
		\[
		\#\left\{
		\mathbf n\in p\mathbb Z^2\setminus\{\mathbf0\}:
		|\mathbf n|\le S
		\right\}
		\ll\frac{S^2}{p^2}.
		\tag*{(3.13)}
		\]
		Consequently, for $R\ge1$,
		\[
		\begin{aligned}
			&\frac1{\pi R^2}
			\int_{|\mathbf x|\le R}
			\bigl(G_z(r;\mathbf x)-G(r;\mathbf x)\bigr)
			\,d\mathbf x\\
			&\quad\le
			\frac{r^2}{R^2}
			\sum_{p>z}
			\#\left\{
			\mathbf n\in p\mathbb Z^2\setminus\{\mathbf0\}:
			|\mathbf n|\le R+r
			\right\}\\
			&\quad\ll_r\varepsilon_z.
		\end{aligned}
		\tag*{(3.14)}
		\]
		On the other hand,
		\[
		|X_r(\mathbf x)-\widetilde X_{r,z}(\mathbf x)|
		\le
		\frac{G_z(r;\mathbf x)-G(r;\mathbf x)}r
		+
		\frac{1-c_z}{r}G_z(r;\mathbf x).
		\tag*{(3.15)}
		\]
		Combining (3.7), (3.14), and
		$|a^k-b^k|\le kC_r^{k-1}|a-b|$, we obtain
		\[
		\sup_{R\ge1}
		\frac1{\pi R^2}
		\int_{|\mathbf x|\le R}
		\left|
		X_r(\mathbf x)^k-\widetilde X_{r,z}(\mathbf x)^k
		\right|
		\,d\mathbf x
		\ll_{r,k}\varepsilon_z.
		\tag*{(3.16)}
		\]
		
		If $f$ is a bounded measurable $Q\mathbb Z^2$-periodic function,
		we decompose the disk into complete fundamental cells and a boundary portion.
		The latter has area $O(QR+Q^2)$, so
		\[
		\begin{aligned}
			\frac1{\pi R^2}
			\int_{|\mathbf x|\le R}f(\mathbf x)\,d\mathbf x
			&=
			\frac1{Q^2}
			\int_{[0,Q]^2}f(\mathbf x)\,d\mathbf x\\
			&\quad+
			O\left(
			\|f\|_\infty
			\left(\frac QR+\frac{Q^2}{R^2}\right)
			\right).
		\end{aligned}
		\tag*{(3.17)}
		\]
		For fixed $z$, applying (3.17) to $f=\widetilde X_{r,z}^k$
		and then using (3.16), we obtain
		\[
		\limsup_{R\to\infty}
		\left|
		\mathcal M_k(r,R)-m_{z,k}
		\right|
		\ll_{r,k}\varepsilon_z.
		\tag*{(3.18)}
		\]
		Thus, for any $z,z'\ge2$,
		\[
		|m_{z,k}-m_{z',k}|
		\ll_{r,k}\varepsilon_z+\varepsilon_{z'}.
		\tag*{(3.19)}
		\]
		Hence $m_{z,k}$ converges as $z\to\infty$.
		Using (3.18) again and substituting (3.11), we obtain
		\[
		\begin{aligned}
			\lim_{R\to\infty}\mathcal M_k(r,R)
			&=
			\lim_{z\to\infty}m_{z,k}\\
			&=
			\lim_{z\to\infty}\lim_{N\to\infty}
			\mathcal C_k(z,N).
		\end{aligned}
		\tag*{(3.20)}
		\]
		
		Since the constant term of each $\widetilde X_{r,z}$ is zero,
		\[
		\lim_{R\to\infty}\mathcal M_1(r,R)=0.
		\tag*{(3.21)}
		\]
		For the second moment, Parseval's identity gives
		\[
		m_{z,2}
		=
		\sum_{\mathbf m\in\mathbb Z^2\setminus\{\mathbf0\}}
		\left|H\left(\frac{\mathbf m}{Q_z}\right)\right|^2
		\le C_r^2.
		\tag*{(3.22)}
		\]
		The sets $Q_z^{-1}\mathbb Z^2$ increase with $z$,
		and their union contains all rational frequencies with squarefree denominators,
		while $H$ vanishes at all other frequencies.
		By the monotone convergence theorem,
		\[
		\begin{aligned}
			\lim_{R\to\infty}\mathcal M_2(r,R)
			&=
			\lim_{z\to\infty}m_{z,2}\\
			&=
			\sum_{\xi\in\mathbb Q^2\setminus\{\mathbf0\}}
			|H(\xi)|^2\\
			&=
			\sum_{\xi\in\mathbb Q^2}
			H(\xi)H(-\xi)
			<\infty.
		\end{aligned}
		\tag*{(3.23)}
		\]
	\end{proof}
	
	\section{Proof of Proposition 2.2}
	
	\begin{lemma}\label{lem:tail}
		For $r\ge3$ and $M\ge1$, the series (2.6) converges absolutely and uniformly,
		the function $X_{r,M}$ is real-valued, and
		\[
		\begin{aligned}
			\langle X_r^2\rangle&\ll\frac{\log r}{r},\\
			\langle|X_r-X_{r,M}|^2\rangle
			&=\langle X_r^2\rangle-\langle X_{r,M}^2\rangle\\
			&\ll\frac{\log r+\log(2M)}{rM}.
		\end{aligned}
		\tag*{(4.1)}
		\]
	\end{lemma}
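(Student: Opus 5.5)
The plan is to bound everything through the explicit size of $H$. Writing $\xi=\mathbf m/q$ in lowest terms with $q$ squarefree, we have
\[
\frac{\mu(q)}{\prod_{p\mid q}(p^2-1)}\ll\frac{1}{q^2}\prod_{p\mid q}\Bigl(1-\frac1{p^2}\Bigr)^{-1}\ll \frac1{q^2},
\]
and $|J_1(t)|\ll\min(t,t^{-1/2})$. Hence
\[
|H(\mathbf m/q)|\ll \frac{1}{q^2}\min\!\Bigl(r,\ \frac{q^{3/2}}{r^{1/2}|\mathbf m|^{3/2}}\Bigr).
\]

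\textbf{Absolute and uniform convergence.} For a fixed $\mathbf m$ with $|\mathbf m|\le M$, we sum over $q$. For $q\le r|\mathbf m|$ the relevant bound is $q^{-1/2}r^{-1/2}|\mathbf m|^{-3/2}$, which sums to $\ll|\mathbf m|^{-1}$. For $q>r|\mathbf m|$ we use $r/q^2$, which sums to $\ll |\mathbf m|^{-1}$. Since only finitely many $\mathbf m$ occur, the series converges absolutely and uniformly in $\mathbf x$. Real-valuedness follows from $H(-\xi)=H(\xi)\in\mathbb R$ together with the symmetry $\mathbf m\mapsto-\mathbf m$ of the index set.

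\textbf{Bound for $\langle X_r^2\rangle$.} By (2.8) and the bound on $H$,
\[
\langle X_r^2\rangle\ll\sum_{q}\sum_{\mathbf m\ne0}\frac1{q^4}\min\!\Bigl(r^2,\frac{q^3}{r|\mathbf m|^3}\Bigr).
\]
For fixed $q$, the terms with $|\mathbf m|\le q/r$ contribute $\ll q^{-4}r^2(q/r)^2=q^{-2}$, and only exist when $q\ge r$; summed over $q\ge r$ this gives $\ll 1/r$. The terms with $|\mathbf m|>\max(1,q/r)$ contribute $\ll q^{-1}r^{-1}\min(1,r/q)$. Summing over $q\le r$ yields $(\log r)/r$, and summing over $q>r$ yields $\ll\sum_{q>r}q^{-2}\ll 1/r$. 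Hence $\langle X_r^2\rangle\ll(\log r)/r$.

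\textbf{Orthogonality identity.} The delicate point is justifying the Parseval-type identity $\langle|X_r-X_{r,M}|^2\rangle=\langle X_r^2\rangle-\langle X_{r,M}^2\rangle$. I would work with $\widetilde X_{r,z}$ and its truncation $X^{(z)}_{r,M}$, obtained by restricting $q$ to divisors of $Q_z$ (with $\mathbf m$ in lowest-terms form, so that $\xi=\mathbf m/q\in Q_z^{-1}\mathbb Z^2$). For these $Q_z$-periodic objects, Parseval in $L^2([0,Q_z]^2)$ applies, and $X^{(z)}_{r,M}$ is precisely the Fourier projection of $\widetilde X_{r,z}$ onto the frequencies with numerator $|\mathbf m|\le M$. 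Therefore the mean of $|\widetilde X_{r,z}-X^{(z)}_{r,M}|^2$ equals $\sum|H(\xi)|^2$ over $\xi\in Q_z^{-1}\mathbb Z^2$ with numerator $|\mathbf m|>M$.

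Next we let $z\to\infty$. By the argument of (3.16), $X_r-\widetilde X_{r,z}$ has small mean absolute value, and both functions are bounded by $C_r$, so it is also small in mean square. The difference $X_{r,M}-X^{(z)}_{r,M}$ tends to $0$ uniformly by the tail estimate in $q$ established above. Monotone convergence then gives
\[
\langle|X_r-X_{r,M}|^2\rangle=\sum_{\substack{\xi=\mathbf m/q\\ |\mathbf m|>M}}|H(\xi)|^2,
\]
and the same reasoning gives $\langle X_{r,M}^2\rangle=\sum_{|\mathbf m|\le M}|H|^2$. This produces the stated difference identity. I expect this limit interchange to be the main technical obstacle, though it is routine once the uniform tail bound is in hand.

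\textbf{Tail estimate.} Repeating the computation above with the restriction $|\mathbf m|>M$ gives:
\begin{itemize}
\item for $q\le rM$, the contribution is $\ll\sum_{q\le rM}q^{-1}r^{-1}M^{-1}\ll\dfrac{\log r+\log(2M)}{rM}$;
\item for $q>rM$, the region $|\mathbf m|\le q/r$ contributes $\ll\sum_{q>rM}q^{-2}\ll 1/(rM)$;
\item the remaining terms with $q>rM$ contribute $\ll\sum_{q>rM}q^{-2}\ll 1/(rM)$.
\end{itemize}
Together these give the bound in (4.1).
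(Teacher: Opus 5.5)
Your proposal is correct. The analytic estimates match the paper's: the same pointwise bound (4.2), the same split at $q\asymp r|\mathbf m|$ for absolute and uniform convergence, and the same real-valuedness argument. For the two $L^2$ bounds you sum over $\mathbf m$ first for each fixed $q$, whereas (4.3)--(4.4) sum over $q$ first for each fixed $\mathbf m$; this is only a bookkeeping difference. One small fix: for $q<r$ your second region should be all $\mathbf m\ne\mathbf0$, not $|\mathbf m|>1$, which changes nothing in the bound.

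The genuine difference is how you justify $\langle|X_r-X_{r,M}|^2\rangle=\langle X_r^2\rangle-\langle X_{r,M}^2\rangle$.

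\textbf{The paper's route.} It first computes the Bohr--Fourier coefficients of $X_r$ itself, $\langle X_r e(\xi\cdot\mathbf x)\rangle=H(\xi)$ in (4.4a), with a separate argument when $d(\xi)$ is not squarefree. It then expands $\langle|X_r-P_A|^2\rangle$ for finite symmetric $A\subseteq\mathcal A_M$ and lets $A$ exhaust $\mathcal A_M$.

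\textbf{Your route.} You never compute Fourier coefficients of $X_r$. You apply Parseval on the torus $\mathbb R^2/Q_z\mathbb Z^2$ to $\widetilde X_{r,z}-X^{(z)}_{r,M}$. You then transfer to $X_r-X_{r,M}$ using two facts:
\begin{itemize}
\item (3.16) with $k=1$, upgraded to mean square via $|X_r-\widetilde X_{r,z}|\le 2C_r$;
\item the uniform tail bound $\sum_{0<|\mathbf m|\le M}\sum_{q>z}|H(\mathbf m/q)|\to0$, valid because a squarefree $q\nmid Q_z$ exceeds $z$.
\end{itemize}
Monotone convergence in $z$ finishes the argument.

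You should state the Minkowski step explicitly. For each $R$, the normalized $L^2$ norms over $|\mathbf x|\le R$ of $X_r-X_{r,M}$ and of $\widetilde X_{r,z}-X^{(z)}_{r,M}$ differ by an amount that tends to $0$ as $z\to\infty$, uniformly in $R$. This same step proves that the limit $\langle|X_r-X_{r,M}|^2\rangle$ exists.

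\textbf{Comparison.} Your route keeps all the Fourier analysis inside periodic $L^2$, where Parseval is standard, and gives existence of the mean at no extra cost. The paper's route yields the coefficient identity (4.4a) as a standalone fact about $X_r$, at the price of handling non-squarefree denominators separately.
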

	
	\begin{proof}
		
		Let $\mathbf m\in\mathbb Z^2\setminus\{\mathbf0\}$ and suppose that $\gcd(m_1,m_2,q)=1$. Using $|J_1(t)|\ll\min(t,t^{-1/2})$ and
		$\prod_{p\mid q}(1-p^{-2})^{-1}\le\zeta(2)$, we obtain
		\[
		\left|H\!\left(\frac{\mathbf m}{q}\right)\right|
		\ll
		\min\left\{\frac r{q^2},
		\frac1{\sqrt{rq}\,|\mathbf m|^{3/2}}\right\}.
		\tag*{(4.2)}
		\]
		For fixed $\mathbf m\ne\mathbf0$, splitting the sum at $q=r|\mathbf m|$ gives
		\[
		\begin{aligned}
			\sum_{\substack{q\ge1\\\gcd(m_1,m_2,q)=1}}\left|H\!\left(\frac{\mathbf m}{q}\right)\right|
			&\ll |\mathbf m|^{-1},\\
			\sum_{\substack{q\ge1\\\gcd(m_1,m_2,q)=1}}\left|H\!\left(\frac{\mathbf m}{q}\right)\right|^2
			&\ll\frac{1+\log(r|\mathbf m|)}{r|\mathbf m|^3}.
		\end{aligned}
		\tag*{(4.3)}
		\]
		When summing the bounds in (4.2), we may drop the restrictions that the fractions be in lowest terms and that the denominators be squarefree.
		The first estimate, together with $H(-\xi)=H(\xi)\in\mathbb R$, shows that the series (2.6) converges absolutely and uniformly and is real-valued.
		The second estimate, together with
		\[
		\sum_{|\mathbf m|>M}
		\frac{1+\log(r|\mathbf m|)}{|\mathbf m|^3}
		\ll\frac{\log r+\log(2M)}M
		\tag*{(4.4)}
		\]
		gives the bound for the tail of the sum of squares.
		
		By (3.8), the spatial mean property of periodic functions, and the estimate (3.16) with $k=1$, we have
		\[
		\langle X_r e(\xi\cdot\mathbf x)\rangle=H(\xi)
		\qquad(\xi\in\mathbb Q^2\setminus\{\mathbf0\}).
		\tag*{(4.4a)}
		\]
		Here, if $d(\xi)$ is squarefree, we first take $z$ sufficiently large that
		$d(\xi)\mid Q_z$. Equation (3.8) then gives the corresponding periodic mean, after which we let $z\to\infty$.
		If $d(\xi)$ is not squarefree, then $\xi\notin Q_z^{-1}\mathbb Z^2$
		for every $z$, so $\langle\widetilde X_{r,z}e(\xi\cdot\mathbf x)\rangle=0$;
		letting $z\to\infty$ in (3.16) shows that the left-hand side of (4.4a) is
		$0=H(\xi)$.
		
		Write
		\[
		\mathcal A_M
		=\left\{\xi\in\mathbb Q^2\setminus\{\mathbf0\}:
		|d(\xi)\xi|\le M\right\}.
		\]
		Take a finite symmetric set $A\subseteq\mathcal A_M$ and let
		\[
		P_A(\mathbf x)=\sum_{\xi\in A}H(\xi)e(-\xi\cdot\mathbf x).
		\]
		By (4.4a) and orthogonality in finite Fourier sums,
		\[
		\begin{aligned}
			\langle P_A^2\rangle
			&=\sum_{\xi\in A}H(\xi)H(-\xi)
			=\sum_{\xi\in A}|H(\xi)|^2,\\
			\langle X_rP_A\rangle
			&=\sum_{\xi\in A}H(\xi)
			\langle X_re(-\xi\cdot\mathbf x)\rangle
			=\sum_{\xi\in A}|H(\xi)|^2.
		\end{aligned}
		\tag*{(4.4b)}
		\]
		Consequently,
		\[
		\langle|X_r-P_A|^2\rangle
		=\langle X_r^2\rangle-\sum_{\xi\in A}|H(\xi)|^2.
		\tag*{(4.4c)}
		\]
		Let the finite symmetric sets $A$ increase to exhaust $\mathcal A_M$. By the absolute and uniform convergence established above,
		$P_A\to X_{r,M}$ uniformly; since $X_r$ is bounded,
		we may pass to the limit in each term of (4.4b)--(4.4c). Combining this with (3.23), we obtain
		\[
		\begin{aligned}
			\langle X_{r,M}^2\rangle
			&=\sum_{\substack{\xi\in\mathbb Q^2\setminus\{\mathbf0\}\\
					|d(\xi)\xi|\le M}}|H(\xi)|^2,\\
			\langle|X_r-X_{r,M}|^2\rangle
			&=\sum_{\substack{\xi\in\mathbb Q^2\setminus\{\mathbf0\}\\
					|d(\xi)\xi|>M}}|H(\xi)|^2.
		\end{aligned}
		\tag*{(4.5)}
		\]
		Summing over all $\mathbf m\ne\mathbf0$ also yields $\langle X_r^2\rangle\ll r^{-1}\log r$.
		
	\end{proof}
	
	\begin{lemma}\label{lem:count}
		Fix an integer $s\ge3$ and nonzero vectors
		$\mathbf m_1,\ldots,\mathbf m_s\in\mathbb Z^2$.
		Let $\mathcal U$ be the set of positive integer tuples
		$(q_1,\ldots,q_s)$ satisfying the following conditions: each $q_i$ is squarefree,
		$\gcd(m_{i,1},m_{i,2},q_i)=1$, and
		\[
		\sum_{1\le i\le s}\frac{\mathbf m_i}{q_i}=\mathbf0,
		\qquad
		\frac{\mathbf m_i}{q_i}+\frac{\mathbf m_j}{q_j}\ne\mathbf0
		\quad(i\ne j).
		\tag*{(4.6)}
		\]
		Then
		\[
		\sum_{(q_1,\ldots,q_s)\in\mathcal U}
		\frac1{\sqrt{q_1\cdots q_s}}\ll_s1.
		\tag*{(4.7)}
		\]
		The bound is uniform over all nonzero integer vectors $\mathbf m_i$.
	\end{lemma}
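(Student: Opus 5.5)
The approach is the $p$-adic argument of Konyagin--Korolev~\cite{KK2016}: the zero-sum relation (4.6) forces every prime dividing some $q_i$ to divide at least two of them. After that, the tuples are organised by which subset of indices each prime divides, and the remaining freedom is controlled by the linear constraint.

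\emph{Step 1 (each prime is shared).} Fix $(q_1,\dots,q_s)\in\mathcal U$ and a prime $p\mid q_i$. Since $q_i$ is squarefree and $\gcd(m_{i,1},m_{i,2},q_i)=1$, some coordinate $m_{i,\ell}$ is prime to $p$. Then the $\ell$-th coordinate of $\mathbf m_i/q_i$ has $p$-adic valuation exactly $-1$. If $p$ divided no other $q_j$, the other $s-1$ terms in that coordinate would be $p$-integral, contradicting $\sum_j m_{j,\ell}/q_j=0$. Hence every prime $p\mid q_1\cdots q_s$ determines a set
\[
S_p=\{i:\ p\mid q_i\}\subseteq\{1,\dots,s\},\qquad |S_p|\ge2 .
\]
Put $Q_S=\prod_{S_p=S}p$. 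Then the $Q_S$ are pairwise coprime and squarefree, and $q_i=\prod_{S\ni i}Q_S$. Moreover $\sqrt{q_1\cdots q_s}=\prod_S Q_S^{|S|/2}$. The primes with $|S_p|\ge3$ are harmless: summing freely over them costs at most $\prod_p\bigl(1+2^s p^{-3/2}\bigr)\ll_s1$.

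\emph{Step 2 (primes in exactly two denominators).} The difficulty is the pair sets $S=\{i,j\}$, whose weight is $Q_S^{-1}$; summed freely these give a divergent product $\prod_p(1+O_s(p^{-1}))$. So here the two-dimensional linear relation, and not just the valuation information, must be used. For a prime with $S_p=\{i,j\}$, the $p$-parts of $\mathbf m_i/q_i$ and $\mathbf m_j/q_j$ must cancel modulo $p$-integral vectors. This gives the congruence
\[
(q_j/p)\,\mathbf m_i+(q_i/p)\,\mathbf m_j\equiv\mathbf0 \pmod p ,
\]
so $\mathbf m_i$ and $\mathbf m_j$ are proportional modulo $p$.

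I would then clear denominators. Multiply (4.6) by $\prod_S Q_S$ to obtain the vector equation
\[
\sum_i \mathbf m_i\prod_{S\not\ni i}Q_S=\mathbf0,
\]
which is homogeneous and multilinear in the variables $Q_S$. The goal is to show that, once the $Q_S$ with $|S|\ge3$ are fixed, the pair variables are determined up to $O_s(1)$ choices. The mechanism is the model case $s=3$, $q_1=ab$, $q_2=ac$, $q_3=bc$. There the relation becomes $c\,\mathbf m_1+b\,\mathbf m_2+a\,\mathbf m_3=\mathbf0$, which is two linear equations in three unknowns. The nondegeneracy condition $\mathbf m_i/q_i+\mathbf m_j/q_j\ne\mathbf0$ should rule out the rank dropping below $2$, so the solutions form a rank-one lattice $t\mathbf v$. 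Pairwise coprimality of $a,b,c$ then forces $t=\pm1$, leaving at most one admissible tuple.

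\emph{Step 3 (general $s$).} In general I would argue by induction on $s$, choosing a set of cardinality $2$ if one is present, and examine whether some proper subfamily $\{\mathbf m_i/q_i\}_{i\in I}$ already sums to $\mathbf0$. If a proper subfamily $I$ of size $\ge3$ sums to zero, the sum splits into two relations of smaller length. The nondegeneracy hypothesis prevents subfamilies of size $2$, and in the splitting I would only invoke the induction hypothesis on parts of size at least $3$. If no proper subfamily sums to zero, the relation is ``minimal''. I would then aim to show that the homogeneous system for the pair variables has a solution space of dimension one, spanned by a primitive vector, as in the $s=3$ case. The pairwise coprimality of the $Q_S$ (and squarefreeness) then pins the scaling.

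\emph{Main obstacle.} I expect Step 3 to be the hardest part, specifically the rank statement for the pair variables uniformly in the $\mathbf m_i$. If a clean rank argument fails, my fallback is to bound only the number of admissible tuples with $\operatorname{lcm}(q_1,\dots,q_s)$ in a dyadic range $[T,2T]$ by $O_s(T^{\varepsilon})$-type quantities via the congruences of Step 2. I would then sum against the weight $(q_1\cdots q_s)^{-1/2}\le T^{-1}$ over dyadic $T$, using that each prime contributes at least $p^{-1}$ to the weight. That $p^{-1}$ factor is what makes the dyadic sum converge.
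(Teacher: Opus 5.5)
\textbf{There is a genuine gap.} Your Step 1 is correct and matches the paper's starting point, but the mechanism you propose for the pair primes fails.

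\textbf{The rigidity claim is false.} You assert that, once the $Q_S$ with $|S|\ge3$ are fixed, the pair variables are determined up to $O_s(1)$ choices. This fails already for $s=3$. The hypothesis $\mathbf m_i/q_i+\mathbf m_j/q_j\ne\mathbf0$ does not prevent the $\mathbf m_i$ from being parallel, and the lemma must hold uniformly over such vectors. Take $\mathbf m_1=\mathbf m_2=(1,0)$ and $\mathbf m_3=(-1,0)$. Then $\mathcal U$ is the set of squarefree solutions of $1/q_1+1/q_2=1/q_3$. It contains $(q_1,q_2,q_3)=(x(x+y),\,y(x+y),\,xy)$ for every coprime pair $x,y$ with $x,y,x+y$ squarefree. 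These tuples have no prime dividing three denominators. In your notation $(a,b,c)=(x+y,x,y)$, which runs over the two-parameter family $a=b+c$, not a rank-one lattice.
\begin{itemize}
\item The same example defeats your ``minimal relation'' step, since for $s=3$ every relation is minimal.
\item For $s\ge4$ the cleared relation $\sum_i\mathbf m_i\prod_{S\not\ni i}Q_S=\mathbf0$ is not linear in the pair variables, so a one-dimensional solution space is not even the right kind of statement.
\end{itemize}
The lemma is still true in this example, since the sum is $\sum 1/(xy(x+y))<\infty$. But the convergence comes from a counting gain, not from rigidity.

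\textbf{The fallback fails for the same reason.} In the example the lcm equals $xy(x+y)$, and the number of tuples with lcm at most $T$ is of order $T^{2/3}$, not $T^{\varepsilon}$. The congruences of Step 2 are only local consequences of (4.6) and give no power saving. With weight at most $L^{-1}$ you need at most $T^{1-\delta}$ tuples with lcm in $[T,2T]$. The trivial bound is $T^{1+\varepsilon}$, since each $q_i$ divides $L$, and the proposal has no mechanism to beat it.

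\textbf{The missing idea.} The paper uses the relation to determine one large common factor, not all the pair variables.
\begin{itemize}
\item Let $q_i$ be the largest denominator and $P=q_1\cdots q_s$. Since $q_i$ is squarefree and each of its primes is shared, some $d=\gcd(q_i,q_j)$ satisfies $d\ge P^{c}$ with $c=1/(s(s-1))$.
\item Write $q_i=da$ and $q_j=db$. Then (4.6) reads $d^{-1}(\mathbf m_i/a+\mathbf m_j/b)=-\sum_{\ell\ne i,j}\mathbf m_\ell/q_\ell$. The vector in parentheses is nonzero precisely by the no-opposite-pair hypothesis, so $d$ is determined by $a$, $b$ and the remaining $q_\ell$, with no condition on the $\mathbf m_i$.
\item Every prime exponent in $P$ is at least $2$, so $P=u^3v^2$ with $u$ squarefree. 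Split according to whether $u>T^{c}$ or $u\le T^{c}$, and use the divisor bound.
\item This gives $\#\{\mathbf q\in\mathcal U:P\le T\}\ll_{s,\varepsilon}T^{1/2-c/2+\varepsilon}$. That is a power saving over the trivial $T^{1/2+\varepsilon}$, which is exactly what makes $\sum P^{-1/2}$ converge under dyadic summation.
\end{itemize}
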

	
	\begin{proof}
		
		Let $P=q_1\cdots q_s$ and $c=1/(s(s-1))$, and write
		$N_{\mathbf m}(T)=\#\{\mathbf q\in\mathcal U:P\le T\}$.
		We first prove that, for every $\varepsilon>0$,
		\[
		N_{\mathbf m}(T)
		\ll_{s,\varepsilon}
		T^{1/2-c/2+\varepsilon}
		\qquad(T\ge2).
		\tag*{(4.8)}
		\]
		
		Every prime dividing $P$ divides at least two of the $q_i$.
		Indeed, if $p$ divided only $q_i$, multiplying (4.6) by
		$\operatorname{lcm}(q_1,\ldots,q_s)$ and reducing modulo $p$
		would give $p\mid m_{i,1}$ and $p\mid m_{i,2}$, contradicting the lowest-terms condition.
		Thus every prime factor of $P$ has exponent at least $2$, and there is a unique factorization
		\[
		P=u^3v^2,
		\qquad u\text{ is squarefree}.
		\tag*{(4.9)}
		\]
		Here $u$ is the product of the prime factors whose exponents in $P$ are odd, so $P/u$ is a perfect square.
		Let $\tau_s(n)$ denote the number of ordered factorizations of $n$ into a product of $s$ positive integers. We use the standard estimate $\tau_s(n)\ll_{s,\varepsilon}n^\varepsilon$ below.
		
		For tuples with $u>T^c$, there are at most $\tau_s(P)$ ordered factorizations for each fixed $P$, so their number is at most
		\[
		\begin{aligned}
			\sum_{u>T^c}\sum_{v\le T^{1/2}u^{-3/2}}\tau_s(u^3v^2)
			&\ll_{s,\varepsilon}
			T^{1/2+\varepsilon}\sum_{u>T^c}u^{-3/2}\\
			&\ll_{s,\varepsilon}T^{1/2-c/2+\varepsilon}.
		\end{aligned}
		\tag*{(4.10)}
		\]
		
		When $u\le T^c$, choose $q_i=\max_jq_j$.
		The integer $q_i$ is squarefree, and each of its prime factors also divides some other $q_j$, so
		\[
		q_i\mid\prod_{j\ne i}\gcd(q_i,q_j).
		\tag*{(4.11)}
		\]
		We can therefore choose $j\ne i$ such that $d=\gcd(q_i,q_j)$ satisfies
		\[
		d\ge q_i^{1/(s-1)}\ge P^c.
		\tag*{(4.12)}
		\]
		Write $q_i=da$ and $q_j=db$. Equation (4.6) becomes
		\[
		\frac1d\left(\frac{\mathbf m_i}{a}
		+\frac{\mathbf m_j}{b}\right)
		+\sum_{\ell\ne i,j}\frac{\mathbf m_\ell}{q_\ell}
		=\mathbf0.
		\tag*{(4.13)}
		\]
		The vector in parentheses is nonzero, since otherwise the original tuple would contain a pair of opposite frequencies.
		Hence, once $a,b$ and the remaining $q_\ell$ are fixed, at most one value of $d$ satisfies this equation.
		
		Since $d$ is squarefree and $d^2\mid P$, writing $P$ in the form
		$P=u n^2$ gives $d\mid n$. Set $w=n/d$. Then
		\[
		ab\prod_{\ell\ne i,j}q_\ell
		=\frac P{d^2}=uw^2\le T^{1-2c},
		\qquad
		w\le T^{1/2-c}u^{-1/2}.
		\tag*{(4.14)}
		\]
		Taking into account the at most $s(s-1)$ choices of $i,j$, the number of tuples in this class is at most
		\[
		\begin{aligned}
			s(s-1)\sum_{u\le T^c}
			\sum_{w\le T^{1/2-c}u^{-1/2}}\tau_s(uw^2)
			&\ll_{s,\varepsilon}
			T^{1/2-c+\varepsilon}\sum_{u\le T^c}u^{-1/2}\\
			&\ll_{s,\varepsilon}T^{1/2-c/2+\varepsilon}.
		\end{aligned}
		\tag*{(4.15)}
		\]
		This proves (4.8), with a constant independent of $\mathbf m_1,\ldots,\mathbf m_s$.
		Taking $\varepsilon=c/4$ and splitting into dyadic ranges of $P$, we obtain
		\[
		\sum_{\mathbf q\in\mathcal U}P^{-1/2}
		\ll_s1+\sum_{h\ge0}2^{-h/2}
		(2^{h+1})^{1/2-c/4}
		\ll_s1.
		\tag*{(4.16)}
		\]
		
	\end{proof}
	
	\begin{proof}[Proof of Proposition~\ref{prop:moments}]
		
		By Lemma~\ref{lem:tail}, taking spatial means term by term gives
		\[
		\langle X_{r,M}^k\rangle
		=\sum_{\substack{\xi_1+\cdots+\xi_k=0\\
				0<|d(\xi_i)\xi_i|\le M\ (1\le i\le k)}}
		H(\xi_1)\cdots H(\xi_k).
		\tag*{(4.17)}
		\]
		This series is absolutely convergent.
		
		Let $U_s(r,M)$ denote the sum of the absolute values of the terms in (4.17) with $s$ frequencies
		such that no two frequencies are opposites of one another.
		By (4.2) and Lemma~\ref{lem:count},
		\[
		\begin{aligned}
			U_s(r,M)
			&\ll_s r^{-s/2}
			\sum_{0<|\mathbf m_1|,\ldots,|\mathbf m_s|\le M}
			\prod_{1\le i\le s}|\mathbf m_i|^{-3/2}\\
			&\ll_s r^{-s/2}M^{s/2}
			\qquad(s\ge3).
		\end{aligned}
		\tag*{(4.18)}
		\]
		
		Starting from any zero-sum tuple of frequencies, successively remove pairs of opposite frequencies until no further removal is possible.
		To make the choice unambiguous, fix a lexicographic order on pairs of positions in advance and, at each step, remove
		the lexicographically smallest removable pair. If $s$ frequencies remain, they still sum to zero and contain no pair of opposites.
		A nonempty remainder must have $s\ge3$; when $k$ is even, parity forces $s\ge4$.
		
		Let $\mathcal R_k(r,M)$ denote the contribution to (4.17) from zero-sum tuples that cannot be fully paired.
		Fix the $(k-s)/2$ disjoint pairs of positions that have been removed and the remaining $s$ positions.
		When estimating absolute values, dropping the additional restrictions imposed by the lexicographic rule only enlarges the summation range.
		The frequencies at each removed pair of positions can independently be written as $(\xi,-\xi)$, and the sum of their absolute contributions is
		$\langle X_{r,M}^2\rangle$; the sum of the absolute contributions from the remainder is at most $U_s(r,M)$.
		There are only $O_k(1)$ such decompositions of the positions, so
		\[
		|\mathcal R_k(r,M)|\ll_k
		\sum_{\substack{3\le s\le k\\s\equiv k\pmod2}}
		\langle X_{r,M}^2\rangle^{(k-s)/2}U_s(r,M).
		\tag*{(4.19)}
		\]
		By $\langle X_{r,M}^2\rangle\ll r^{-1}\log r$ and (4.18),
		the term corresponding to $s$ in (4.19) is at most
		\[
		r^{-k/2}(\log r)^{(k-s)/2}M^{s/2}.
		\tag*{(4.19a)}
		\]
		Since $M\le\log r$, when $k$ is odd, the bound in (4.19a) is dominated by the bound for $s=3$;
		when $k$ is even, the bound for $s=4$ applies. This gives the respective error bounds
		$M^{3/2}(\log r)^{(k-3)/2}r^{-k/2}$ and
		$M^2(\log r)^{k/2-2}r^{-k/2}$ in (2.9).
		
		We now treat the fully paired contribution to the even moments. Write $k=2j$ and let $\mathfrak P_{2j}$
		be the set of all perfect pairings of $\{1,\ldots,2j\}$.
		For each $\mathcal P\in\mathfrak P_{2j}$, require the frequencies at each pair of positions to be opposites of one another.
		The sum of the corresponding terms is then exactly $\langle X_{r,M}^2\rangle^j$. Summing over all pairings therefore gives
		\[
		\frac{(2j)!}{2^j j!}\langle X_{r,M}^2\rangle^j,
		\tag*{(4.19b)}
		\]
		However, a frequency tuple satisfying more than one pairing is counted repeatedly in (4.19b).
		
		For a fully paired tuple $\boldsymbol\xi=(\xi_1,\ldots,\xi_{2j})$,
		let $n(\boldsymbol\xi)$ be the number of perfect pairings that it satisfies. If $\mathcal F_{2j}$
		denotes the total contribution of fully paired tuples to (4.17), with each tuple counted once, then the difference between (4.19b)
		and $\mathcal F_{2j}$ is
		\[
		\sum_{\substack{\boldsymbol\xi\in\mathcal A_M^{2j}\\
				n(\boldsymbol\xi)\ge1}}
		\bigl(n(\boldsymbol\xi)-1\bigr)
		\prod_{1\le i\le2j}H(\xi_i).
		\]
		Since $n-1\le n(n-1)$, its absolute value is at most
		\[
		\sum_{\substack{\mathcal P,\mathcal P'\in\mathfrak P_{2j}\\
				\mathcal P\ne\mathcal P'}}
		\sum_{\substack{\boldsymbol\xi\in\mathcal A_M^{2j}\\
				\boldsymbol\xi\text{ satisfies both }\mathcal P,\mathcal P'}}
		\prod_{1\le i\le2j}|H(\xi_i)|.
		\tag*{(4.19c)}
		\]
		Superimposing two distinct pairings $\mathcal P,\mathcal P'$ produces a multigraph that decomposes into even cycles.
		An edge common to both pairings gives a cycle of length $2$; since $\mathcal P\ne\mathcal P'$,
		at least one cycle has length $2\ell\ge4$. Within a cycle of length $2\ell$,
		the pairing conditions force the frequencies to alternate between $\xi$ and $-\xi$. For $\ell\ge2$,
		using (4.2) and enlarging the summation range gives
		\[
		\begin{aligned}
			\sum_{\substack{\xi\in\mathbb Q^2\setminus\{\mathbf0\}\\
					|d(\xi)\xi|\le M}} |H(\xi)|^{2\ell}
			&\le \sum_{\xi\in\mathbb Q^2\setminus\{\mathbf0\}}|H(\xi)|^{2\ell}\\
			&\ll_\ell r^{-\ell}
			\sum_{q\ge1}q^{-\ell}
			\sum_{\mathbf m\ne\mathbf0}|\mathbf m|^{-3\ell}\\
			&\ll_\ell r^{-\ell}
			\qquad(\ell\ge2).
		\end{aligned}
		\tag*{(4.20)}
		\]
		Each cycle of length $2$ contributes $\langle X_{r,M}^2\rangle$.
		Let $L$ be the sum of the half-lengths of all cycles of length at least $4$. Then $L\ge2$,
		and the remaining cycles of length $2$ number $j-L$. By (4.20) and
		$\langle X_{r,M}^2\rangle\ll r^{-1}\log r$, 
		the sum in (4.19c) corresponding to fixed $\mathcal P\ne\mathcal P'$ is at most
		\[
		r^{-L}\left(r^{-1}\log r\right)^{j-L}
		\ll r^{-j}(\log r)^{j-2}.
		\tag*{(4.20a)}
		\]
		Since the cardinality of $\mathfrak P_{2j}$ depends only on $k$, summing over
		the finitely many pairs of pairings in (4.19c) shows that the overcounting error is
		\[
		O_k\!\left(r^{-k/2}(\log r)^{k/2-2}\right),
		\tag*{(4.21)}
		\]
		which can be absorbed into (2.9).
		
	\end{proof}
	
	\section{Proof of Proposition 2.3}
	
	\begin{lemma}\label{lem:phase}
		Let $\mathbf1(n)\equiv1$, and let $*$ denote Dirichlet convolution. Suppose that the arithmetic function $a=\mathbf1*b$ satisfies
		$\sum_{d\ge1}|b(d)|/d<\infty$.
		Then, as $T\to\infty$,
		\[
		\sum_{n\le T}\frac{a(n)}n e\!\left(\frac{2T}n\right)
		=o(\log T).
		\tag*{(5.1)}
		\]
	\end{lemma}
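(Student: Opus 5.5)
We first reduce to the case $b=\delta_1$, i.e.\ $a\equiv1$, and then estimate the resulting exponential sum with derivative tests. Writing $a(n)=\sum_{d\mid n}b(d)$ and $n=dm$,
\[
\sum_{n\le T}\frac{a(n)}n e\!\left(\frac{2T}n\right)
=\sum_{d\le T}\frac{b(d)}d\,S\!\left(\frac Td;\,\frac{2T}{d}\right),
\qquad
S(Y;\lambda)=\sum_{m\le Y}\frac1m e\!\left(\frac{\lambda}m\right).
\]
Here the phase is $2T/(dm)=2Y/m$ with $Y=T/d$. So the inner sum is exactly $S_0(Y):=\sum_{m\le Y}m^{-1}e(2Y/m)$, of the same shape as the original with $T$ replaced by $T/d$.

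The trivial bound $|S_0(Y)|\le1+\log Y\le1+\log T$ holds uniformly in $d$. Since $\sum_d|b(d)|/d<\infty$, dominated convergence (applied to the sum over $d$ after dividing by $\log T$) shows it suffices to prove $S_0(Y)=o(\log Y)$ as $Y\to\infty$. Indeed, for each fixed $d$ we have $\log(T/d)\sim\log T$, so the $d$-th term divided by $\log T$ tends to $0$, and it is dominated by $2|b(d)|/d$.

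To show $S_0(Y)=o(\log Y)$, fix $\varepsilon\in(0,1)$ and split $m$ into dyadic blocks $N<m\le2N$. The blocks with $N<Y^{\varepsilon}$ contribute at most $\varepsilon\log Y+O(1)$ trivially. It remains to show that the blocks with $Y^{\varepsilon}\le N\le Y$ contribute $O_\varepsilon(1)$ in total. By partial summation it suffices to bound $\sum_{N<m\le N'}e(f(m))$ for $N'\le2N$, with $f(t)=2Y/t$, by $N\cdot N^{-\eta}$ for some $\eta=\eta(\varepsilon)>0$. The sum of $N^{-\eta}$ over dyadic $N\ge Y^\varepsilon$ is then $O_\varepsilon(1)$.

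On such a block we have $|f^{(k)}(t)|\asymp_k Y/N^{k+1}$, and the bound is obtained in two ranges.
\begin{itemize}
\item \emph{Large $N$ ($N\ge2\sqrt Y$).} Here $0<|f'|\le 2Y/N^2\le 1/2$ and $f'$ is monotone. The Kusmin--Landau inequality gives the bound $\ll N^2/Y$. After partial summation this is $\ll N/Y$, which sums over dyadic $N\le Y$ to $O(1)$.
\item \emph{Middle range ($Y^{\varepsilon}\le N\le 2\sqrt Y$).} Write $N=Y^{\alpha}$ with $\varepsilon\le\alpha\le1/2+o(1)$. Choose an integer $k\ge2$, depending only on $\varepsilon$, with $(k+1)\varepsilon>1$, so that $\lambda_k:=Y/N^{k+1}\le1$ throughout the range; take $k$ minimal adapted to $\alpha$, with finitely many choices since $\alpha\ge\varepsilon$. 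Then apply the $k$-th derivative test in the sharp form of Heath-Brown~\cite{HB2016},
\[
\sum_{N<m\le N'}e(f(m))\ll_{k} N\lambda_k^{1/(k(k-1))}+N^{1-1/(k(k-1))}\lambda_k^{-1/(k(k-1))}.
\]
Alternatively one can use the classical van der Corput form $N\lambda_k^{1/(2^k-2)}+N^{1-2^{2-k}}\lambda_k^{-1/(2^k-2)}$.
\end{itemize}

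The main obstacle is the bookkeeping in the middle range. We must check that, for $\alpha$ in a compact subinterval of $[\varepsilon,1/2]$, the minimal admissible $k$ makes both terms $\ll N^{1-\eta}$. The first term is fine as soon as $\lambda_k\le N^{-c}$. The second needs $\lambda_k$ not too small, i.e.\ $Y\ge N^{k+1-c'}$, which is exactly what minimality of $k$ provides. Near $\alpha=1/2$ the case $k=2$ can be handled directly: it gives $Y^{1/2}N^{-1/2}+N^{3/2}Y^{-1/2}$, hence after division by $N$ a saving of $\ll Y^{-\delta}$ once $N\ge Y^{1/3+\delta}$. The remaining thresholds between consecutive $k$ are treated by passing to $k+1$.

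Since all the resulting exponents $\eta$ are bounded below in terms of $\varepsilon$ alone, we obtain $\limsup_{Y\to\infty}|S_0(Y)|/\log Y\le\varepsilon$. Letting $\varepsilon\to0$ gives $S_0(Y)=o(\log Y)$, and hence (5.1).
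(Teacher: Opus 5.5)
Your reduction is fine: the convolution identity with $|S_0|\le1+\log T$ and dominated convergence is equivalent to the paper's split at $d\le D$. The trivial treatment of $m<Y^{\varepsilon}$ is also fine. Your Kusmin--Landau bound on $N\ge2\sqrt Y$ is correct and slightly stronger than the paper's trivial bound on $n>T^{1-\eta}$.

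The gap is in the middle range, which is exactly the bookkeeping you flagged. The inequality you attribute to Heath-Brown is not his Theorem~1. For $k\ge3$ that theorem reads
\[
\sum e(f(n))\ll_{k,\epsilon}N^{1+\epsilon}\Bigl(\lambda^{1/(k(k-1))}+N^{-1/(k(k-1))}+N^{-2/(k(k-1))}\lambda^{-2/(k^2(k-1))}\Bigr).
\]
Your version breaks down as follows.
\begin{itemize}
\item Your second term divided by $N$ is $(Y/N^{k})^{-1/(k(k-1))}$. It saves only when $Y\ge N^{k+c}$, not $Y\ge N^{k+1-c'}$ as you wrote.
\item Your first term needs $Y\le N^{k+1-c}$.
\item So the admissible ranges are $k<\log Y/\log N<k+1$. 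These are disjoint, and minimality of $k$ gives only $Y\ge N^{k}$, with no margin.
\item At $N\approx Y^{1/j}$ with $3\le j\le1/\varepsilon$, no $k$ saves anything. For example, at $N=Y^{1/3}$ your $k=2$ bound gives $Y^{1/2}N^{-3/2}=1$, and your $k=3$ second term gives $(Y/N^3)^{-1/6}=1$.
\item For $k>j$ your second term exceeds the trivial bound $N$, so ``passing to $k+1$'' cannot help.
\end{itemize}
Hence the claim that all blocks with $N\ge Y^{\varepsilon}$ contribute $O_\varepsilon(1)$ is not justified as written.

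Any of three repairs works.
\begin{itemize}
\item \emph{Quote Heath-Brown correctly.} The middle term then carries no $\lambda$. The last term becomes, up to constants, $N^{1+\epsilon}(N/Y)^{2/(k^2(k-1))}$, a power saving for all $N\le2\sqrt Y$. So one fixed $k\ge3$ with $(k+1)\varepsilon>1$ covers the whole middle range, with no adaptive choice and no thresholds. This is exactly the paper's argument: it fixes an even $h\ge4$ with $(h+1)\eta>1$ and treats $T^{\eta}\le N\le T^{1-\eta}$ uniformly.
\item \emph{Use the classical van der Corput form you mention.} It does support an adaptive choice, because its admissible ranges $k-3+2^{3-k}<\log Y/\log N<k+1$ overlap. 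Already $k=3$ covers $Y^{1/4+\delta}\le N\le Y^{1-\delta}$.
\item \emph{Bound the threshold blocks trivially.} Treat the dyadic blocks within a factor $Y^{\pm\delta}$ of each threshold $Y^{1/j}$ by the trivial bound. The total cost is $O(\delta\varepsilon^{-1}\log Y)$. Taking $\delta=\varepsilon^2$ gives $O(\varepsilon\log Y)$, which still yields $o(\log Y)$, though not your stated $O_\varepsilon(1)$.
\end{itemize}
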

	
	\begin{proof}
		
		We first prove the assertion when $a(n)\equiv1$. Fix $0<\eta<1/2$.
		On the ranges $n\le T^\eta$ and $T^{1-\eta}<n\le T$,
		the combined contribution of the absolute-value estimates is $2\eta\log T+O(1)$.
		
		On the intermediate range, there is a constant $c_\eta>0$ such that
		\[
		\sup_{I\subseteq[N,2N]}
		\left|\sum_{n\in I\cap\mathbb Z}e\!\left(\frac{2T}n\right)\right|
		\ll_\eta NT^{-c_\eta}
		\quad(T^\eta\le N\le T^{1-\eta}).
		\tag*{(5.2)}
		\]
		By Heath-Brown~\cite[Theorem~1]{HB2016}, if a real-valued function $f$ has a continuous $h$th derivative on an interval $J$ of length $N$, with $|f^{(h)}|\asymp_h\lambda$ and $h\ge3$, then
		\[
		\left|\sum_{n\in J\cap\mathbb Z}e(f(n))\right|
		\ll_{h,\epsilon}
		N^{1+\epsilon}
		\left(
		\lambda^{1/(h(h-1))}
		+N^{-1/(h(h-1))}
		+N^{-2/(h(h-1))}\lambda^{-2/(h^2(h-1))}
		\right).
		\tag*{(5.3)}
		\]
		Take $f(x)=2T/x$, so that $\lambda\asymp_h T/N^{h+1}$.
		Choose a fixed even integer $h\ge4$ such that $(h+1)\eta>1$; the choice of even parity is made only to ensure that
		$f^{(h)}$ is positive, so that the sign convention in the cited theorem applies directly. Write
		\[
		\alpha_h=\frac1{h(h-1)},
		\qquad
		\beta_h=\frac2{h^2(h-1)}.
		\]
		For $T^\eta\le N\le T^{1-\eta}$, after dividing (5.3) by $N^{1+\epsilon}$,
		the three terms in parentheses satisfy, respectively,
		\[
		\begin{aligned}
			\lambda^{\alpha_h}
			&\ll_h T^{-((h+1)\eta-1)\alpha_h},\\
			N^{-\alpha_h}
			&\le T^{-\eta\alpha_h},\\
			N^{-2\alpha_h}\lambda^{-2/(h^2(h-1))}
			&\ll_h (N/T)^{\beta_h}
			\le T^{-\eta\beta_h}.
		\end{aligned}
		\tag*{(5.3a)}
		\]
		All three exponents giving decay are strictly positive. Since $N^\epsilon\le T^\epsilon$, first choose
		\[
		0<\epsilon<\frac12\min\left\{
		((h+1)\eta-1)\alpha_h,\,\eta\alpha_h,\,\eta\beta_h
		\right\},
		\]
		and then decrease $c_\eta>0$ if necessary to obtain (5.2) for an interval of length $N$.
		For any subinterval $I\subseteq[N,2N]$, the derivative remains of order
		$T/N^{h+1}$. Applying the same theorem with length $L=|I|\ge1$,
		the total powers of $L$ in the three terms on the right-hand side of (5.3) are, respectively,
		$1+\epsilon$, $1+\epsilon-\alpha_h$, and
		$1+\epsilon-2\alpha_h$, all of which are positive.
		Thus $L$ may be replaced by $N$ to obtain the same upper bound.
		A subinterval of length less than $1$ contains at most two integers. Requiring also that $c_\eta<\eta/2$,
		we have $NT^{-c_\eta}\ge T^{\eta/2}$, so the trivial estimate is also covered by (5.2).
		Hence (5.2) holds uniformly for all subintervals.
		
		A dyadic decomposition of the intermediate range, followed by partial summation, gives
		\[
		\sum_{T^\eta<n\le T^{1-\eta}}
		\frac1n e\!\left(\frac{2T}n\right)
		\ll_\eta T^{-c_\eta}\log T=o(1).
		\tag*{(5.4)}
		\]
		Letting first $T\to\infty$ and then $\eta\to0$, we obtain
		$S(T):=\sum_{n\le T}n^{-1}e(2T/n)=o(\log T)$.
		
		In the general case, convolution gives
		\[
		\sum_{n\le T}\frac{a(n)}n e\!\left(\frac{2T}n\right)
		=\sum_{d\le T}\frac{b(d)}d S(T/d).
		\tag*{(5.5)}
		\]
		Fix $D$. The contribution from $d\le D$ is $o_D(\log T)$,
		whereas the absolute value of the contribution from $d>D$ is at most
		\[
		(1+\log T)\sum_{d>D}\frac{|b(d)|}d.
		\tag*{(5.6)}
		\]
		Letting first $T\to\infty$ and then $D\to\infty$ proves the assertion.
		
	\end{proof}
	
	\begin{proof}[Proof of Proposition~\ref{prop:variance}]
		
		Write
		\[
		h(q)=\prod_{p\mid q}(1-p^{-2})^{-1},
		\qquad
		a_{\mathbf m}(q)=\mu(q)^2h(q)^2
		\mathbf1_{\{\gcd(m_1,m_2,q)=1\}}.
		\tag*{(5.7)}
		\]
		Fix $\mathbf m\ne\mathbf0$ and let $g=\gcd(|m_1|,|m_2|)\ge1$.
		Write $a_{\mathbf m}=\mathbf1*b_{\mathbf m}$.
		For $p\nmid g$,
		\[
		b_{\mathbf m}(p)=h(p)^2-1=O(p^{-2}),
		\quad b_{\mathbf m}(p^2)=-h(p)^2,
		\quad b_{\mathbf m}(p^j)=0\quad(j\ge3).
		\tag*{(5.8)}
		\]
		For $p\mid g$, we have $b_{\mathbf m}(p)=-1$ and
		$b_{\mathbf m}(p^j)=0$ for $j\ge2$.
		Therefore, for every $\alpha>1/2$,
		\[
		\sum_{d\ge1}\frac{|b_{\mathbf m}(d)|}{d^\alpha}<\infty.
		\tag*{(5.9)}
		\]
		Write
		\[
		\begin{aligned}
			\kappa_{\mathbf m}
			&=\sum_{d\ge1}\frac{b_{\mathbf m}(d)}d\\
			&=\prod_{p\mid g}(1-p^{-1})
			\prod_{p\nmid g}\left[
			(1-p^{-1})\left(1+\frac{h(p)^2}p\right)\right]>0.
		\end{aligned}
		\tag*{(5.10)}
		\]
		Fix $\alpha\in(1/2,1)$. By (5.9) and
		$\log(2d)\ll_\alpha d^{1-\alpha}$, we have
		\[
		\begin{aligned}
			\sum_{d\ge1}\frac{|b_{\mathbf m}(d)|\log(2d)}d
			&\ll_\alpha\sum_{d\ge1}\frac{|b_{\mathbf m}(d)|}{d^\alpha}<\infty,\\
			\frac1T\sum_{d\le T}|b_{\mathbf m}(d)|
			&\ll_{\mathbf m,\alpha}T^{\alpha-1},\\
			\sum_{d>T}\frac{|b_{\mathbf m}(d)|}d
			&\ll_{\mathbf m,\alpha}T^{\alpha-1}.
		\end{aligned}
		\tag*{(5.10a)}
		\]
		Write $H_n=\sum_{1\le j\le n}j^{-1}$, and let $\gamma$ denote Euler's constant.
		The estimate $H_n=\log n+\gamma+O(n^{-1})$ implies that
		$H_{\lfloor x\rfloor}=\log x+\gamma+O(x^{-1})$ for $x\ge1$.
		By the convolution identity,
		\[
		\begin{aligned}
			\sum_{q\le T}\frac{a_{\mathbf m}(q)}q
			&=\sum_{d\le T}\frac{b_{\mathbf m}(d)}d H_{\lfloor T/d\rfloor}\\
			&=(\log T+\gamma)\sum_{d\le T}\frac{b_{\mathbf m}(d)}d
			-\sum_{d\le T}\frac{b_{\mathbf m}(d)\log d}d\\
			&\quad+O\!\left(\frac1T\sum_{d\le T}|b_{\mathbf m}(d)|\right).
		\end{aligned}
		\tag*{(5.10b)}
		\]
		Equation (5.10a) shows that the sum involving $\log d$ is bounded.
		Replacing $\sum_{d\le T}b_{\mathbf m}(d)/d$ by $\kappa_{\mathbf m}$ in (5.10b)
		produces an error of $O_{\mathbf m,\alpha}(T^{\alpha-1}\log T)$.
		Taking $\alpha=3/4$, we obtain
		\[
		\sum_{q\le T}\frac{a_{\mathbf m}(q)}q
		=\kappa_{\mathbf m}\log T+O_{\mathbf m}(1).
		\tag*{(5.11)}
		\]
		Lemma~\ref{lem:phase} gives
		\[
		\sum_{q\le T}\frac{a_{\mathbf m}(q)}q
		\sin\!\left(\frac{4\pi T}q\right)
		=o_{\mathbf m}(\log T).
		\tag*{(5.12)}
		\]
		
		Let $T=r|\mathbf m|$. For $q\le T$, using
		\[
		J_1(t)^2=\frac{1-\sin(2t)}{\pi t}+O(t^{-2})
		\qquad(t\ge1),
		\tag*{(5.13)}
		\]
		together with (2.3), we obtain
		\[
		\begin{aligned}
			\sum_{q\ge1}
			\mathbf1_{\{\gcd(m_1,m_2,q)=1\}}
			\left|H\!\left(\frac{\mathbf m}{q}\right)\right|^2
			&=\frac{18}{\pi^6r|\mathbf m|^3}
			\sum_{q\le T}\frac{a_{\mathbf m}(q)}q
			\left(1-\sin\frac{4\pi T}q\right)\\
			&\quad+O\!\left(\frac1{r|\mathbf m|^3}\right).
		\end{aligned}
		\tag*{(5.14)}
		\]
		Here the tail with $q>T$ is estimated using the first bound in (4.2);
		the sum of the Bessel remainder terms for $q\le T$ is also $O(r^{-1}|\mathbf m|^{-3})$.
		Thus, for every fixed $M$,
		\[
		\begin{aligned}
			\langle X_{r,M}^2\rangle&=\sigma_M^2\frac{\log r}r
			+o_M\!\left(\frac{\log r}r\right),\\
			\sigma_M^2&=\frac{18}{\pi^6}
			\sum_{0<|\mathbf m|\le M}
			\frac{\kappa_{\mathbf m}}{|\mathbf m|^3}.
		\end{aligned}
		\tag*{(5.15)}
		\]
		
		Let $S=\sum_{\mathbf m\ne\mathbf0}|\mathbf m|^{-3}$.
		Decomposing the lattice points according to $\gcd(m_1,m_2)$ gives
		\[
		\sum_{\substack{\mathbf m\ne\mathbf0\\
				\gcd(m_1,m_2,q)=1}}|\mathbf m|^{-3}
		=S\prod_{p\mid q}(1-p^{-3}).
		\tag*{(5.16)}
		\]
		On the other hand, since $a_{\mathbf m}=\mathbf1*b_{\mathbf m}$,
		\[
		\frac1x\sum_{q\le x}a_{\mathbf m}(q)
		=\sum_{d\le x}b_{\mathbf m}(d)
		\frac{\lfloor x/d\rfloor}{x}.
		\tag*{(5.16a)}
		\]
		By (5.10a) and $\sum_d|b_{\mathbf m}(d)|/d<\infty$,
		the right-hand side of (5.16a) tends to
		$\sum_db_{\mathbf m}(d)/d=\kappa_{\mathbf m}$. Moreover, since
		$0\le a_{\mathbf m}(q)\le\zeta(2)^2$ and
		$\sum_{\mathbf m\ne\mathbf0}|\mathbf m|^{-3}<\infty$, 
		dominated convergence allows us to interchange the sum over $\mathbf m$ with the limiting average over $q$. Thus (5.16) gives
		\[
		\begin{aligned}
			\sum_{\mathbf m\ne\mathbf0}
			\frac{\kappa_{\mathbf m}}{|\mathbf m|^3}
			&=\lim_{x\to\infty}\frac1x\sum_{q\le x}
			\mu(q)^2h(q)^2
			\sum_{\substack{\mathbf m\ne\mathbf0\\
					\gcd(m_1,m_2,q)=1}}|\mathbf m|^{-3}\\
			&=S\lim_{x\to\infty}\frac1x\sum_{q\le x}c(q),
		\end{aligned}
		\tag*{(5.16b)}
		\]
		where
		\[
		c(q)=\mu(q)^2h(q)^2\prod_{p\mid q}(1-p^{-3}).
		\]
		The function $c$ is multiplicative, and
		\[
		c(p)=h(p)^2(1-p^{-3})=1+O(p^{-2}),
		\qquad c(p^j)=0\quad(j\ge2).
		\]
		Writing $c=\mathbf1*\beta$, we have
		\[
		\beta(p)=c(p)-1=O(p^{-2}),
		\qquad \beta(p^2)=-c(p)=O(1),
		\qquad \beta(p^j)=0\quad(j\ge3).
		\]
		The Euler product therefore gives
		\[
		\sum_{d\ge1}\frac{|\beta(d)|}{d}<\infty.
		\tag*{(5.16c)}
		\]
		The same convolution-averaging argument as in (5.16a) shows that
		\[
		\lim_{x\to\infty}\frac1x\sum_{q\le x}c(q)
		=\sum_{d\ge1}\frac{\beta(d)}d
		=\prod_p\left(1+\frac{\beta(p)}p+
		\frac{\beta(p^2)}{p^2}\right)
		=\prod_p(1-p^{-1})\left(1+\frac{c(p)}p\right).
		\]
		Substituting this into (5.16b), we obtain
		\[
		\begin{aligned}
			\sum_{\mathbf m\ne\mathbf0}
			\frac{\kappa_{\mathbf m}}{|\mathbf m|^3}
			&=S\prod_p\left[(1-p^{-1})
			\left(1+\frac{h(p)^2(1-p^{-3})}p\right)\right]\\
			&=S\prod_p\left(1-\frac1{(p+1)^2}\right).
		\end{aligned}
		\tag*{(5.17)}
		\]
		Finally, the local factors at each prime satisfy
		\[
		(1-p^{-1})
		\left(1+\frac{p^3-1}{(p^2-1)^2}\right)
		=1-\frac1{(p+1)^2}.
		\tag*{(5.18)}
		\]
		It follows that $\sigma_M^2\to\sigma^2$.
		By (4.1) and (5.15),
		\[
		\limsup_{r\to\infty}
		\left|\frac{r\langle X_r^2\rangle}{\log r}-\sigma_M^2\right|
		\ll\frac1M.
		\tag*{(5.19)}
		\]
		Letting $M\to\infty$ now gives (2.10).
		
	\end{proof}
	
	\section{Proof of the Central Limit Theorem}
	
	\begin{proof}[Proof of Theorem~\ref{thm:clt}]
		
		For $r\ge3$ and $M\ge1$, let
		\[
		\begin{aligned}
			Y_r(\mathbf x)&=\sqrt{\frac r{\sigma^2\log r}}X_r(\mathbf x),\\
			Z_{r,M}(\mathbf x)&=\sqrt{\frac r{\sigma^2\log r}}X_{r,M}(\mathbf x).
		\end{aligned}
		\tag*{(6.1)}
		\]
		For fixed $M$, (4.1) and (5.15) give
		\[
		\begin{aligned}
			\limsup_{r\to\infty}\langle|Y_r-Z_{r,M}|^2\rangle
			&\ll\frac1M,\\
			\lim_{r\to\infty}\frac{r\langle X_{r,M}^2\rangle}{\sigma^2\log r}
			&=\frac{\sigma_M^2}{\sigma^2}.
		\end{aligned}
		\tag*{(6.2)}
		\]

		When $1\le M\le\log r$, multiplying (2.9) by $(r/(\sigma^2\log r))^{k/2}$ gives
		\[
		\langle Z_{r,M}^k\rangle
		=
		\begin{cases}
			\displaystyle
			\frac{k!}{2^{k/2}(k/2)!}
			\left(\frac{r\langle X_{r,M}^2\rangle}{\sigma^2\log r}\right)^{k/2}
			+O_k\!\left(\frac{M^2}{(\log r)^2}\right),
			&k\ge4\text{ even},\\[6pt]
			\displaystyle O_k\!\left(\frac{M^{3/2}}{(\log r)^{3/2}}\right),
			&k\ge3\text{ odd}.
		\end{cases}
		\tag*{(6.3)}
		\]
		The first moment is zero, and the limit of the second moment is given by (6.2).
		
		For fixed $r,M$, the function $X_{r,M}$ is a uniform limit of bounded periodic functions.
		For every continuous function $\varphi:\mathbb R\to\mathbb R$,
		the mean $\langle\varphi(Z_{r,M})\rangle$ exists, so $Z_{r,M}$ has a compactly supported spatial limiting distribution.
		The normal distribution is uniquely determined by its moments. Hence, by (6.3) and the method of moments,
		for fixed $M$, these distributions converge as $r\to\infty$ to
		$\mathcal N(0,\sigma_M^2/\sigma^2)$.
		
		For fixed $r$, the function $G(r;\mathbf x)$ takes only finitely many integer values.
		On this finite set, the indicator of each singleton can be represented as a polynomial in $G(r;\mathbf x)$.
		By Proposition~\ref{prop:fourier}, the spatial frequencies of all these values exist, so $Y_r$ also has a spatial limiting distribution.
		
		For each fixed $t\in\mathbb R$, using
		$|e^{ita}-e^{itb}|\le |t||a-b|$, the Cauchy--Schwarz inequality, and (6.2), we obtain
		\[
		\begin{aligned}
			&\limsup_{r\to\infty}
			\left|\langle e^{itY_r}\rangle-e^{-t^2/2}\right|\\
			&\qquad\ll\frac{|t|}{\sqrt M}
			+\left|\exp\!\left(-\frac{\sigma_M^2t^2}{2\sigma^2}\right)
			-e^{-t^2/2}\right|.
		\end{aligned}
		\tag*{(6.4)}
		\]
		Letting $M\to\infty$ and using $\sigma_M^2\to\sigma^2$, we obtain
		$\langle e^{itY_r}\rangle\to e^{-t^2/2}$.
		By L\'evy's continuity theorem, the spatial limiting distributions of $Y_r$ converge weakly to the standard normal distribution.
		Since
		\[
		Y_r(\mathbf x)
		=\frac{G(r;\mathbf x)-6r^2/\pi}
		{\sqrt{\sigma^2r\log r}},
		\tag*{(6.5)}
		\]
		and the standard normal distribution is continuous everywhere, for every $\nu\in\mathbb R$,
		\[
		\begin{aligned}
			&\lim_{r\to\infty}\lim_{R\to\infty}
			\frac1{\pi R^2}
			\operatorname{meas}\left\{
			\mathbf x\in\mathbb R^2:|\mathbf x|\le R,\,
			\frac{G(r;\mathbf x)-6r^2/\pi}
			{\sqrt{\sigma^2r\log r}}\ge\nu
			\right\}\\
			&\qquad=\frac1{\sqrt{2\pi}}
			\int_\nu^\infty e^{-y^2/2}\,dy.
		\end{aligned}
		\tag*{(6.6)}
		\]
		
	\end{proof}
	
	\section{Comparison with Two-Dimensional Poisson Process}
	
	Let $\Pi$ be a homogeneous Poisson point process on $\mathbb R^2$ with intensity $\rho=6/\pi^2$,
	and let $N_r$ denote the number of points in the disk $|\mathbf x|\le r$.
	For every bounded Borel set $A$, the random variable $\Pi(A)$ has a Poisson distribution with parameter $\rho\operatorname{meas}(A)$,
	and the counts in pairwise disjoint sets are independent; see~\cite[Definition~3.1]{LP2017}.
	Therefore,
	\[
	\begin{aligned}
		N_r&\sim\operatorname{Pois}(\lambda_r),
		\qquad \lambda_r=\rho\pi r^2=\frac{6r^2}{\pi},\\
		\mathbb E N_r&=\operatorname{Var}(N_r)=\lambda_r.
	\end{aligned}
	\tag*{(7.1)}
	\]
	For each fixed $t\in\mathbb R$, Taylor expansion gives
	\[
	\begin{aligned}
		\mathbb E\exp\!\left(it\frac{N_r-\lambda_r}{\sqrt{\lambda_r}}\right)
		&=\exp\!\left\{
		\lambda_r\left(e^{it/\sqrt{\lambda_r}}-1-
		\frac{it}{\sqrt{\lambda_r}}\right)\right\}\\
		&\longrightarrow e^{-t^2/2}.
	\end{aligned}
	\tag*{(7.2)}
	\]
	Thus the Poisson count satisfies
	\[
	\frac{N_r-6r^2/\pi}{r\sqrt{6/\pi}}
	\xrightarrow{\mathrm d}\mathcal N(0,1)
	\qquad(r\to\infty).
	\tag*{(7.3)}
	\]
	
	For the coprime pair count, Propositions~\ref{prop:fourier} and~\ref{prop:variance} give
	\[
	\begin{aligned}
		\langle G(r;\cdot)\rangle&=\lambda_r,\\
		\left\langle\bigl(G(r;\cdot)-\lambda_r\bigr)^2\right\rangle
		&=r^2\langle X_r^2\rangle\sim\sigma^2r\log r.
	\end{aligned}
	\tag*{(7.4)}
	\]
	The two counts have the same mean.
	The standard deviation of the Poisson count is $r\sqrt{6/\pi}$, whereas the spatial standard deviation of the coprime pair count is asymptotic to $\sqrt{\sigma^2r\log r}$.
	Under the normalization used for the Poisson count, the coprime pair count converges to zero in spatial mean square:
	\[
	\begin{aligned}
		\left\langle\left(
		\frac{G(r;\cdot)-\lambda_r}{r\sqrt{6/\pi}}
		\right)^2\right\rangle
		&=\frac{\left\langle\bigl(G(r;\cdot)-\lambda_r\bigr)^2\right\rangle}
		{\operatorname{Var}(N_r)}\\
		&\sim\frac{\pi\sigma^2}{6}\frac{\log r}{r}
		\longrightarrow0.
	\end{aligned}
	\tag*{(7.5)}
	\]
	
	The difference between the variances can also be expressed in terms of fluctuations per unit area:
	\[
	\frac{\left\langle\bigl(G(r;\cdot)-\lambda_r\bigr)^2\right\rangle}{\pi r^2}
	\sim\frac{\sigma^2}{\pi}\frac{\log r}{r}\longrightarrow0,
	\qquad
	\frac{\operatorname{Var}(N_r)}{\pi r^2}=\rho.
	\tag*{(7.6)}
	\]
	Intuitively, coprime pairs are more uniformly distributed than the points of a Poisson process.

	\section{Statement on the Use of AI}
	GPT-6 suggested to the author the combinatorial method used in the proof of Proposition 2.2 and provided Lemma 5.1 and its proof. In addition, GPT-6 polished the Chinese draft and translated it into English. The author has reviewed all AI-generated content and takes responsibility for it.

\end{document}